\newcommand{\NN}{\mathbb N}
\newcommand{\ZZ}{\mathbb Z}
\newcommand{\RR}{\mathbb R}
\newcommand{\EE}{\mathbb E}
\newcommand{\CC}{\mathbb C}
\newcommand{\del}{\subseteq}
\newcommand{\eps}{\varepsilon}
\newcommand{\ph}{\varphi}
\newcommand{\MM}{\mathcal M_{\infty}}
\newcommand{\Emax}{\mathbb E^{\mathrm{max}}}
\newcommand{\Emin}{\mathbb E^{\mathrm{min}}}
\newcommand{\tnorm}[1]{{\left\vert\kern-0.25ex\left\vert\kern-0.25ex\left\vert #1 
    \right\vert\kern-0.25ex\right\vert\kern-0.25ex\right\vert}}
\DeclareMathOperator{\Ra}{Re}
\DeclareMathOperator{\diag}{diag}
\DeclareMathOperator{\supp}{supp}
\DeclareMathOperator{\rank}{rank}
\DeclareMathOperator{\Tr}{Tr}
\newtheorem{stn}{Sætning}[section]
\newtheorem{theorem}[stn]{Theorem}
\newtheorem{cor}[stn]{Corollary}
\newtheorem{lemma}[stn]{Lemma}
\newtheorem{prop}[stn]{Proposition}
\newtheorem{question}[stn]{Question}
\theoremstyle{definition}\newtheorem{defi}[stn]{Definition}
\theoremstyle{remark}\newtheorem{rem}[stn]{Remark}
\title[Inverse and stability theorems for amenable groups]{Operator algebraic approach to inverse and stability theorems for amenable groups}
\author{Marcus De Chiffre}
\address{M. De Chiffre, TU Dresden, 01062 Dresden, Germany}
\email{marcus\_dorph.de\_chiffre@tu-dresden.de}
\author{Narutaka Ozawa}
\address{N. Ozawa, RIMS, Kyoto University, 606-8502 Kyoto, Japan}
\email{narutaka@kurims.kyoto-u.ac.jp}
\author{Andreas Thom}
\address{A. Thom, TU Dresden, 01062 Dresden, Germany}
\email{andreas.thom@tu-dresden.de}
\begin{document}
\begin{abstract}
We prove an inverse theorem for the Gowers $U^2$-norm for maps $G\to\mathcal M$ from an countable, discrete, amenable group $G$ into a von Neumann algebra $\mathcal M$ equipped with an ultraweakly lower semi-continuous, unitarily invariant (semi-)norm $\Vert\cdot\Vert$. We use this result to prove a stability result for unitary-valued $\eps$-re\-pre\-sen\-ta\-tions $G\to\mathcal U(\mathcal M)$ with respect to $\Vert\cdot \Vert$.
\end{abstract}

\maketitle

\tableofcontents

\section{Introduction}

\subsection{Inverse theorems}

The uniformity norms first appeared in Gowers' work on arithmetic progressions \cite{gow} and \cite{gow2}. For a map $\ph\colon G\to \CC$ defined on a finite abelian group, the second uniformity norm is given by
\[\Vert \ph\Vert_{U^2}=\frac{1}{|G|^3}\sum_{x-y+z-w=0}\ph(x)\overline{\ph(y)}\ph(z)\overline{\ph(w)}.\]
It is simple to show that a function with large $U^2$-norm is correlated to some character $\chi\colon G\to\mathcal U_1\del \CC$ in the sense that
$\frac{1}{|G|}\sum_{x\in G}\ph(x)\overline{\chi(x)}$
is relatively large. This result was generalized by Gowers and Hatami to non-abelian finite groups and matrix-valued maps. More precisely, they proved the following theorem.

\begin{theorem}[Gowers-Hatami, \cite{gh}]\label{ghmain}
Let $G$ be a finite group, let $c\in [0,1]$ and let $\ph\colon G\to\mathbb M_n$ with $\Vert\ph(x)\Vert_{\mathrm{op}}\leq 1$ such that
\[\Vert\ph\Vert_{U^2}:=\frac{1}{|G|^3}\sum_{xy^{-1}zw^{-1}=e}\Tr(\ph(x)\ph(y)^*\ph(z)\ph(w)^*)\geq cn.\]
Then there are $m\in [\frac{c}{2-c}n,\frac{2-c}{c}n]$, $\pi\colon G\to\mathcal U_m$ and maps $U,V\colon \CC^n\to\CC^m$ such that
\[\frac{1}{|G|}\sum_{x\in G}\Tr(\ph(x)V^*\pi(x)^*U)\geq t(c)n,\]
where $t(c)=\max\big\{\frac{c^8}{(2-c)^8},\frac{c^2}{4}\big\}.$
Moreover, if $n\leq m$ we can take $U$ and $V$ to be isometries and if $n\geq m$ we can take $U$ and $V$ to be co-isometries.
\end{theorem}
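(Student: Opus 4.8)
The plan is to run the Fourier-analytic argument of Gowers and Hatami on the finite group $G$. Write $\EE_x=\frac1{|G|}\sum_{x\in G}$ for the uniform average, and for each irreducible unitary representation $\pi\colon G\to\mathcal U_{d_\pi}$ set
\[\hat{\ph}(\pi)=\EE_x\,\ph(x)\otimes\overline{\pi(x)}\in\mathbb M_n\otimes\mathbb M_{d_\pi}.\]
First I would record two identities, both consequences of Peter--Weyl/Plancherel on $G$. The normalization $\Vert\ph(x)\Vert_{\mathrm{op}}\le 1$ gives $\EE_x\Vert\ph(x)\Vert_{S_2}^2\le n$, which by Plancherel reads
\[\sum_\pi d_\pi\Vert\hat{\ph}(\pi)\Vert_{S_2}^2\le n.\]
For the numerator, introducing the autocorrelation $\Phi(g)=\EE_x\,\ph(x)^*\ph(xg)$ and changing variables in the defining sum shows $\Vert\ph\Vert_{U^2}=\EE_g\Vert\Phi(g)\Vert_{S_2}^2$; since the Fourier transform turns this convolution into $\hat{\Phi}(\pi)=\hat{\ph}(\pi)^*\hat{\ph}(\pi)$, another application of Plancherel yields the clean formula
\[\Vert\ph\Vert_{U^2}=\sum_\pi d_\pi\Vert\hat{\ph}(\pi)\Vert_{S_4}^4\ge cn.\]

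Next I would extract a single large Fourier coefficient. Using $\Vert A\Vert_{S_4}^4\le\Vert A\Vert_{\mathrm{op}}^2\Vert A\Vert_{S_2}^2$ on each summand and comparing the two displays gives
\[cn\le\sum_\pi d_\pi\Vert\hat{\ph}(\pi)\Vert_{\mathrm{op}}^2\Vert\hat{\ph}(\pi)\Vert_{S_2}^2\le\big(\max_\pi\Vert\hat{\ph}(\pi)\Vert_{\mathrm{op}}^2\big)\sum_\pi d_\pi\Vert\hat{\ph}(\pi)\Vert_{S_2}^2\le n\max_\pi\Vert\hat{\ph}(\pi)\Vert_{\mathrm{op}}^2,\]
so there is an irreducible $\pi$ with $\Vert\hat{\ph}(\pi)\Vert_{\mathrm{op}}\ge\sqrt c$. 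Set $A=\hat{\ph}(\pi)$ and note $\Vert A\Vert_{\mathrm{op}}\le\EE_x\Vert\ph(x)\Vert_{\mathrm{op}}\le 1$, so every singular value of $A$ lies in $[0,1]$ with the top one in $[\sqrt c,1]$.

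The heart of the argument is to manufacture an honest representation out of $A$. The key point is approximate invariance: because $\ph$ is only approximately multiplicative, $(\ph(g)\otimes\overline{\pi(g)})A=\EE_y\,\ph(g)\ph(g^{-1}y)\otimes\overline{\pi(y)}\approx A$, and the defect is small precisely because $\Vert A\Vert_{\mathrm{op}}$ is large. Vectorizing, I would view the dominant right singular vectors $v_1,\dots,v_k\in\CC^n\otimes\CC^{d_\pi}$ of $A$, those with singular value exceeding a threshold $\tau$, as operators $\CC^{d_\pi}\to\CC^n$; assembling them produces maps $U,V\colon\CC^n\to\CC^{d_\pi}\otimes\CC^k=\CC^m$ and the representation $\rho=\pi\otimes 1_k$ on $\CC^m$, where $m=d_\pi k$. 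The approximate invariance then forces $U\ph(x)V^*\approx\rho(x)$, which is what drives the correlation $\EE_x\Tr(\ph(x)V^*\rho(x)^*U)$ to be large. The threshold count gives the dimension bound: from $k\tau^2\le\Vert A\Vert_{S_2}^2\le n/d_\pi$ one gets $m=d_\pi k\le n/\tau^2$, and a complementary lower bound on $k$ yields $m\ge\tfrac{c}{2-c}n$; choosing $\tau^2\asymp\tfrac{c}{2-c}$ lands $m$ in $[\tfrac{c}{2-c}n,\tfrac{2-c}{c}n]$. Expanding the correlation in the retained singular data and estimating the truncation error in two different ways produces the two quantities $\tfrac{c^8}{(2-c)^8}$ and $\tfrac{c^2}{4}$, of which one keeps the larger, and since $U,V$ are built from orthonormal singular vectors they are automatically isometries when $n\le m$ and co-isometries when $n\ge m$.

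I expect the main obstacle to be exactly this last construction: passing from the approximate intertwiner $A$ to a genuine representation $\rho$ while \emph{simultaneously} keeping the correlation bounded below by $t(c)n$, pinning the dimension $m$ inside the prescribed interval, and arranging the (co)isometry property. These three requirements are coupled through the single free parameter $\tau$, and the delicate bookkeeping needed to satisfy all of them at once—rather than any individual estimate—is where the real work lies.
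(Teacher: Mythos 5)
Your sketch is a reconstruction of the original Fourier-analytic argument of Gowers--Hatami, not the route this paper takes (the paper applies Stinespring's dilation theorem, Theorem \ref{sdt}, to the positive definite map $\tilde\ph(x)=\EE_y\ph(xy)\ph(y)^*$, gets $\pi$ and $U$ with $\Vert U\Vert_{\mathrm{op}}\le 1$ for free, sets $V:=\EE_x\pi(x)^*U\ph(x)$, controls the dimension by the spectral cutoff $P=\chi_{[c/2,1]}(A^{1/2})$ with $A=\EE_x\pi(x)VV^*\pi(x)^*$, and produces partial isometries via Lemma \ref{partiso}, writing a contraction as the average of two partial isometries; this is Theorem \ref{invtrace2}, with correlation bound $\tfrac{c}{2}\ge\tfrac{c^2}{4}$). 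That difference would be legitimate, but your version of the Fourier argument has a step that fails. Your extraction of a \emph{single} irreducible $\pi$ with $\Vert\hat\ph(\pi)\Vert_{\mathrm{op}}\ge\sqrt c$ cannot yield a correlation proportional to $n$: the correlation harvestable from one irreducible is bounded by its Fourier mass $d_\pi\Vert\hat\ph(\pi)\Vert_{S_2}^2$, which can be $o(n)$. Concretely, take $\ph=\bigoplus_{i=1}^r\rho_i$ a direct sum of $r$ pairwise inequivalent irreducibles of common dimension $d$, so $n=rd$. Then $\ph$ is a genuine representation, so $c=1$ and $t(c)n=n$; yet each $\hat\ph(\rho_i)$ is a rank-one projection, so any $U,V\colon\CC^n\to\CC^{d k}$ built from the singular data of a single $\rho_i$ give correlation at most $d=n/r\lll n$. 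Gowers and Hatami avoid this by thresholding the singular values across \emph{all} irreducibles simultaneously and taking $\pi$ to be the resulting direct sum with multiplicities; the identity $\sum_\pi d_\pi\Vert\hat\ph(\pi)\Vert_{S_4}^4\ge cn$ then guarantees the total retained mass is a definite fraction of $n$. Your own displays contain the needed inputs, but the ``max over $\pi$'' move throws away exactly the aggregation that makes the theorem true.

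Two further points are asserted away rather than deferred. First, the ``approximate invariance'' heuristic $(\ph(g)\otimes\overline{\pi(g)})A\approx A$ has no basis: the theorem carries \emph{no} approximate multiplicativity hypothesis on $\ph$ (only the $U^2$ lower bound), and none is needed, since the direct sum of irreducibles is already an exact representation and the correlation $\EE_x\Tr(\ph(x)V^*\pi(x)^*U)$ is computed \emph{exactly} as a weighted sum of retained singular values. Second, the claim that $U,V$ are ``automatically'' (co)isometries because they come from orthonormal singular vectors is false and is precisely the subtle point of the original proof: orthonormality of the vectors $v_i\in\CC^n\otimes\CC^{d_\pi}$ translates only into trace conditions on the associated operators $\CC^{d_\pi}\to\CC^n$, and the assembled $U,V$ come with no bound on $\Vert U\Vert_{\mathrm{op}},\Vert V\Vert_{\mathrm{op}}$ whatsoever; Gowers and Hatami need a series of arguments to modify $U,V$ into contractions before an extreme-point argument yields isometries or co-isometries (this is exactly the difficulty that motivates the present paper's Stinespring approach, where the bound $\Vert U\Vert_{\mathrm{op}}\le 1$ is automatic). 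So beyond the admitted bookkeeping, the proposal is missing the aggregation over irreducibles and the entire operator-norm control of $U$ and $V$.
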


The proof of Theorem \ref{ghmain} presented in \cite{gh} relies heavily on properties of the Fourier transform of $\ph$, which has to be generalised in two ways from the usual Fourier transform since $\ph$ is defined on a possibly non-abelian group and takes matrix values. This Fourier theory works well for finite (or compact) groups, but it is hard to generalize to infinite, discrete groups. 
Our interest in the above theorem is in particular due to the fact that it can be used to prove a certain stability result for $\eps$-representations with respect to the $p$-norm coming from the trace on $\mathbb M_n$, which we will explain in depth later.
In a private communication with Gowers and Hatami, the second named author of this paper provided an alternative, operator algebraic proof of this stability result. A strength of this approach is that it is possible to generalize the proof, replacing $G$ with an (infinite) amenable group and $\mathbb M_n$ with a semi-finite von Neumann algebras $\mathcal M$. Gowers and Hatami ask in \cite{gh} whether this approach can be accomodated to a proof of their above theroem. In this article, we answer this question affirmatively -- this is our main result and can be found in Theorems \ref{main}, \ref{invtrace1} and \ref{invtrace2} which constitute generalizations of Theorem \ref{ghmain} to various settings. 

Our main theorem (Theorem \ref{main}) is formulated in a quite general way and it might be hard to compare the proofs, so for clarity's sake, we will now outline the main differences between them. Gowers and Hatami use the singular value decomposition of the Fourier transform of $\ph$ to cherry-pick the irreducible representations that are correlated to $\ph$ and put these together to a representation $\pi$ together with maps $U,V\colon \CC^n\to\CC^m$ such that $\frac{1}{|G|}\sum_{x\in G}\Tr(\ph(x)V^*\pi(x)^*U)$ is comparatively big. A subtle difficulty in their proof is that they have no control over the operator norms of $U$ and $V$ so they make use of a series of clever arguments to alter $U$ and $V$ so that they satisfy $\Vert U\Vert_{\mathrm{op}},\Vert V\Vert_{\mathrm{op}}\leq 1$.  Once these estimates are achieved, an extreme point argument provides the necessary isometries or co-isometries.
On the other hand, the crux of our proof is to use the Stinespring dilation theorem, which is a fundamental theorem about completely positive maps. It immediately brings into existence $\pi$ and the maps $U,V:\CC^n\to\CC^m$ so that $\frac{1}{|G|}\sum_{x\in G}\Tr(\ph(x)V^*\pi(x)^*U)$ is big, but the main advantage of using Stinespring's theorem is that it automatically gives us the estimates $\Vert U\Vert_{\mathrm{op}},\Vert V\Vert_{\mathrm{op}}\leq 1$ which makes the proof considerably shorter.

\subsection{Stability theorems}
As mentioned, the inverse theorem can be used to prove a so-called stability theorem for $\eps$-re\-pre\-sen\-ta\-tions.
Let us define what we mean by an $\eps$-re\-pre\-sen\-ta\-tion, or, more generally, an $\eps$-ho\-mo\-morph\-ism.
\begin{defi}
Let $\eps>0$, let $G$ and $H$ be groups and let $d$ be a metric on $H$. An $\eps$-\emph{homomorphism} is a map $\ph\colon G\to H$ such that
\[d(\ph(gh),\ph(g)\ph(h))<\eps,\qquad \forall g,h\in G.\]
\end{defi}
In accordance with nomenclature for ordinary homomorphisms, we call $\ph$ an $\eps$\emph{-representation} if $H$ consists of operators on a Hilbert space.
A natural and very general question about $\eps$-homomorphisms, which for instance was stated by Ulam in \cite{ulam}, can be formulated in the following way.

\begin{question}\label{juppo}
Consider a class $\mathscr G$ of groups together with a class $\mathscr H$ of metric groups. Given $\delta>0$ is there a $\eps>0$ such that for all $G\in\mathscr G$ and $(H,d)\in\mathscr H$ and $\eps$-homomorphisms $\ph\colon G\to H$ there is a genuine homomorphism $\pi\colon G\to H$ such that $d(\ph(g),\pi(g))<\delta$ for all $g\in G$?
\end{question}
If the question has a positive answer, we say that the class $\mathscr G$ is \emph{stable} with respect to to $\mathscr H$.
This question has been studied in various settings, and the answer highly depends much on which classes of groups and metrics one considers. The case where $\mathscr G$ is the class of finite (or, more generally, compact) groups and $\mathscr H$ consists of unitary groups equipped with the metric induced from the operator norm was treated in \cite{gkr}. Later Kazhdan generalized this result to amenable groups.
\begin{theorem}[Kazhdan, \cite{art:kazh}]\label{kazh}
Let $0<\eps <\frac{1}{200}$, let $G$ be a countable, discrete, amenable group and let $\mathcal H$ be a Hilbert space. Let $\ph\colon G\to\mathcal U(\mathcal H)$ be an $\eps$-rep\-re\-sen\-ta\-tion with respect to the metric coming from the operator norm $\Vert\cdot\Vert_{\mathrm{op}}$. Then there exists a representation $\pi\colon G\to\mathcal U(\mathcal H)$ such that
\[\Vert \ph(g)-\pi(g)\Vert_{\mathrm{op}}<2\eps,\qquad g\in G.\]
\end{theorem}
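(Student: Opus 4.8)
The plan is to exploit amenability through an \emph{averaging} procedure that converts the approximate multiplicativity of $\ph$ into exact multiplicativity by a rapidly converging iteration, and then to pass from the resulting bounded representation to a unitary one. Since $G$ is amenable, fix an invariant mean $m$ on $\ell^\infty(G)$, which for a group may be taken two-sided invariant. For a bounded map $\Phi\colon G\to\mathrm{GL}(\mathcal H)$ whose values lie uniformly close to $\mathcal U(\mathcal H)$, I would define an operator-valued average $A[\Phi]\colon G\to B(\mathcal H)$ by
\[\langle A[\Phi](g)\xi,\eta\rangle = m_h\big[\langle \Phi(h)^{-1}\Phi(hg)\xi,\eta\rangle\big],\qquad \xi,\eta\in\mathcal H,\]
the mean being applied to the bounded scalar function of $h$; as the resulting sesquilinear form is bounded it defines a genuine operator $A[\Phi](g)$ with $\Vert A[\Phi](g)\Vert_{\mathrm{op}}\le\sup_h\Vert\Phi(h)^{-1}\Phi(hg)\Vert_{\mathrm{op}}$.

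First I would record two estimates for $\Phi=\ph$. Because $\ph$ is an $\eps$-representation and each $\ph(h)$ is unitary, $\Vert\ph(h)^{-1}\ph(hg)-\ph(g)\Vert_{\mathrm{op}}=\Vert\ph(hg)-\ph(h)\ph(g)\Vert_{\mathrm{op}}<\eps$ for every $h$, so averaging gives $\Vert A[\ph](g)-\ph(g)\Vert_{\mathrm{op}}\le\eps$. The crucial point is that the \emph{defect improves quadratically}: writing $\ph(h)^{-1}\ph(hg_1)=\ph(g_1)+\alpha_h$ and $\ph(hg_1)^{-1}\ph(hg_1g_2)=\ph(g_2)+\beta_h$ with $\Vert\alpha_h\Vert_{\mathrm{op}},\Vert\beta_h\Vert_{\mathrm{op}}<\eps$, the factorisation $\ph(h)^{-1}\ph(hg_1g_2)=(\ph(g_1)+\alpha_h)(\ph(g_2)+\beta_h)$ together with right-invariance of $m$ (used to reindex $h\mapsto hg_1$) makes all first-order terms telescope, leaving
\[A[\ph](g_1g_2)-A[\ph](g_1)A[\ph](g_2)=m_h[\alpha_h\beta_h]-m_h[\alpha_h]\,m_h[\beta_h],\]
whose norm is at most $2\eps^2$. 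Thus $A[\ph]$ is a $2\eps^2$-representation taking values within $\eps$ of unitaries.

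Iterating $\Phi_0=\ph$, $\Phi_{n+1}=A[\Phi_n]$ then drives the defect $\delta_n$ to zero at the rate $\delta_{n+1}\le C\delta_n^2$, so that for $\eps$ small enough $\delta_n\to0$ super-exponentially while the displacements $\Vert\Phi_{n+1}-\Phi_n\Vert_{\mathrm{op}}\le C'\delta_n$ are summable. Hence $\Phi_n$ converges uniformly to a genuine homomorphism $\Phi_\infty\colon G\to\mathrm{GL}(\mathcal H)$, uniformly invertible and with $\sup_g\Vert\Phi_\infty(g)-\ph(g)\Vert_{\mathrm{op}}$ controlled by a geometric series in $\eps$. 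Finally I would unitarize $\Phi_\infty$ by the classical Day--Dixmier trick available for amenable groups: the operator $S=m_g[\Phi_\infty(g)^*\Phi_\infty(g)]$ is positive, invertible and, since each $\Phi_\infty(g)$ is close to a unitary, close to $1$; using that $\Phi_\infty$ is a homomorphism and that $m$ is invariant one checks that $\pi(g):=S^{1/2}\Phi_\infty(g)S^{-1/2}$ is unitary and that $\pi$ is a genuine representation. As $S^{\pm1/2}$ are close to $1$, the map $\pi$ stays close to $\Phi_\infty$, hence to $\ph$.

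The main obstacle will be \emph{quantitative}: every step above perturbs the map, and one must bookkeep the averaging displacement, the geometric series from the iteration, and the unitarization error so that their total does not exceed $2\eps$; this is precisely what forces the hypothesis $\eps<\frac1{200}$ and yields the constant $2$. A secondary point requiring care is the legitimacy of the operator-valued mean and the verification that all iterates remain uniformly invertible with inverses bounded independently of $n$, which is guaranteed by staying within a fixed small neighbourhood of $\mathcal U(\mathcal H)$ and is what keeps both the quadratic recursion and the final unitarization under control.
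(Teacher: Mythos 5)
Your route is genuinely different from the paper's. The paper obtains Theorem \ref{kazh} as an almost immediate corollary of its general stability theorem: apply Theorem \ref{epso} with $\mathcal M\del\mathbb B(\mathcal H)$ the von Neumann algebra generated by $\ph(G)$ and $\Vert\cdot\Vert=\Vert\cdot\Vert_{\mathrm{op}}$; since $1_\mathcal M-U^*U$ and $P-UU^*$ are projections of operator norm $<40\eps<1$, they vanish, so the partial isometry $U$ is a unitary onto $P\mathcal H$ and $\pi(g)=U^*\rho(g)U$ is a genuine representation with $\Vert\ph(g)-\pi(g)\Vert_{\mathrm{op}}<71\eps$ --- the paper explicitly concedes that its constant differs from Kazhdan's original $2\eps$. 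You instead reconstruct, in essence, Kazhdan's original argument: operator-valued averaging with a quadratically convergent iteration. Your central computation is correct: with right-invariance used to reindex $h\mapsto hg_1$, the first-order terms do telescope and $A[\ph](g_1g_2)-A[\ph](g_1)A[\ph](g_2)=m_h[\alpha_h\beta_h]-m_h[\alpha_h]m_h[\beta_h]$ has norm at most $2\eps^2$; the weak-$*$ definition of the operator-valued mean (the same device as the paper's $\mathbb E_x\ph(x)$), the uniform-invertibility bootstrap, and the Day--Dixmier unitarization are all legitimate. What your approach buys is a short, self-contained, elementary proof of this one theorem; what the paper's buys is a single machine that also yields the inverse theorem and stability for arbitrary unitarily invariant semi-norms, with the dimension-jump phenomenon (the projection $P$) built in, invisible here only because operator-norm-small projections must vanish.

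There is, however, a genuine quantitative gap at the end, and it is not mere bookkeeping: by letting the iterates drift out of $\mathcal U(\mathcal H)$ you must pay for a terminal unitarization, and that payment is \emph{first order} in $\eps$. Indeed, with $\eta:=\sup_g\Vert\Phi_\infty(g)-\ph(g)\Vert_{\mathrm{op}}=\eps+O(\eps^2)$ one gets $\Vert S-1\Vert_{\mathrm{op}}\leq 2\eta+\eta^2$, and the naive estimate $\Vert S^{1/2}\Phi_\infty(g)S^{-1/2}-\Phi_\infty(g)\Vert_{\mathrm{op}}\approx\Vert S^{1/2}-1\Vert_{\mathrm{op}}+\Vert S^{-1/2}-1\Vert_{\mathrm{op}}\approx 2\eta$ gives a total of roughly $3\eps$. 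Even the sharpest use of cancellation --- writing $S^{1/2}\Phi S^{-1/2}-\Phi=S^{1/2}[\Phi,S^{-1/2}]$ and exploiting the exact invariance $\Phi(g)^*S\Phi(g)=S$, which yields $[S,\Phi(g)]=(\Phi(g)^{-*}-\Phi(g))S$ of norm about $2\eta$ --- only lands you at $2\eps+O(\eps^2)$, still strictly above $2\eps$. Kazhdan's device for actually reaching $2\eps$ is to correct \emph{inside} the unitary group at every step: the defect unitaries $c_n(g,h)=\ph_n(g)\ph_n(h)\ph_n(gh)^*$ are $\eps_n$-close to $1$, hence admit self-adjoint logarithms $a_n(g,h)$ of norm of order $\eps_n$; setting $b_n(g)$ to be an invariant mean of these over $h$ and $\ph_{n+1}(g)=e^{-ib_n(g)}\ph_n(g)$ keeps every iterate unitary-valued, gives $\eps_{n+1}=O(\eps_n^2)$, and makes the total displacement $\eps+O(\eps^2)<2\eps$ with no terminal unitarization at all. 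So, as written, your proposal proves the theorem with some constant $C\eps$ (exactly as the paper's own proof proves it with $71\eps$), but not with the stated constant $2$.
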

On the other hand P. Rolli \cite{rolli} gave an easy construction of non-trivial 1-dimensional $\eps$-representations of the free group on two generators, that is, a family $\ph_\eps\colon\mathbb F_2\to\mathcal U_1$ of $\eps$-representations ($\eps>0$) uniformly bounded away from the set of genuine representations. This construction was used in \cite{bto} to prove the existence of non-trivial $\eps$-representations $G\to\mathcal U(\mathcal H)$ of any group $G$ containing a free group on some (in general infinite dimensional) Hilbert space $\mathcal H$ with respect to the operator norm. More generally, they proved the existence of non-trivial $\eps$-representations for groups $G$ such the map $H^2_b(G,\RR)\to H^2(G,\RR)$ from bounded cohomology to usual cohomology is not injective. To the best of our knowledge, it is still open whether stability of $\eps$-representations is a characterizing property for amenable groups. Although we will not discuss this question here, let us state it properly.

\begin{question}
Does there exist a non-amenable group $G$ such that for all $\delta>0$ there exists $\eps>0$ such that if $\ph\colon G\to \mathcal U(\mathcal H)$ is an $\eps$-representation with respect to the operator norm there is a genuine representation $\pi\colon G\to\mathcal U(\mathcal H)$ satisfying $\Vert \ph(g)-\pi(g)\Vert_{\mathrm{op}}<\delta$?
\end{question}
It is also worth mentioning that a version of Question \ref{juppo} was considered in \cite{johnson} for Banach algebras and $\eps$-multiplicative functionals.

In this paper, our focus is $\eps$-representations of amenable groups with respect to some norm different from the operator norm. An important and motivating example is the case where $(\mathcal M,\tau)$ is a von Neumann algebra equipped with a faithful, normal trace and $\ph\colon G\to\mathcal U(\mathcal M)$ is an $\eps$-representation with repsect to the $p$-norm $\Vert T\Vert_p=\tau(|X|^p)^{1/p}$.
Gowers and Hatami use their inverse theorem to prove a stability result for $\eps$-representations $\ph\colon G\to \mathcal U_n$ in the case where $G$ is finite and $\mathcal U_n$ is equipped with the $p$-norm coming from the trace. For $p=2$, their result can be stated in the following way.
\begin{theorem}[Gowers-Hatami, \cite{gh}]\label{gohat}
Let $G$ be a finite group, let $0<\eps<\frac{1}{16}$ and let $\ph\colon G\to\mathcal U_n$ be an $\eps$-representation with repsect to the normalized $2$-norm $\Vert\cdot\Vert_2$. Then there exists $m\in [n,(1-4\eps^2)^{-1}n]$ together with a representation $\pi\colon G\to\mathcal U_m$ and an isometry $U\colon \CC^n\to\CC^m$ such that
\[\Vert\ph(g)-U^*\pi(g)U\Vert_2<42\eps,\qquad g\in G.\]
\end{theorem}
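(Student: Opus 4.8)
The plan is to deduce Theorem \ref{gohat} from the inverse theorem (Theorem \ref{ghmain}) by first showing that an $\eps$-representation automatically has $U^2$-norm close to the maximal value $n$. Writing the defining sum with $w=xy^{-1}z$ and using that each $\ph(g)$ is a $\Vert\cdot\Vert_2$-unit vector, I would first record the elementary estimate $\Vert\ph(x)\ph(y)^*-\ph(xy^{-1})\Vert_2<\eps$ (apply the $\eps$-representation property to the pair $(xy^{-1},y)$ and multiply by the unitary $\ph(y)^*$). Two applications of the triangle inequality then give $\Vert\ph(xy^{-1}z)-\ph(x)\ph(y)^*\ph(z)\Vert_2<2\eps$, and expanding $\Vert\ph(xy^{-1}z)-\ph(x)\ph(y)^*\ph(z)\Vert_2^2$ yields $\mathrm{Re}\,\tfrac1n\Tr(\ph(x)\ph(y)^*\ph(z)\ph(xy^{-1}z)^*)\ge 1-2\eps^2$ for every triple $(x,y,z)$. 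As the $U^2$-sum is real (the involution $(x,y,z,w)\mapsto(w,z,y,x)$ preserves the constraint and conjugates each summand), this gives $\Vert\ph\Vert_{U^2}\ge(1-2\eps^2)n$. Hence Theorem \ref{ghmain} applies with $c=1-2\eps^2$, producing $m$, a representation $\pi\colon G\to\mathcal U_m$ and maps $U,V\colon\CC^n\to\CC^m$ with $\tfrac1{|G|}\sum_x\Tr(\ph(x)V^*\pi(x)^*U)\ge t(c)n$. I would take the isometry branch $n\le m$ (so $U,V$ are isometries) and note $m\le\tfrac{2-c}{c}n=\tfrac{1+2\eps^2}{1-2\eps^2}n\le(1-4\eps^2)^{-1}n$, which is exactly the asserted range.

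Writing $\Psi(g)=U^*\pi(g)V$, so that $V^*\pi(g)^*U=\Psi(g)^*$, the second step converts the correlation into an $L^2$-bound. Since $U,V$ are isometries and $\pi(g)$ is unitary one has $\Vert\Psi(g)\Vert_2\le1$, and the correlation $\tfrac1{|G|}\sum_x\Tr(\ph(x)\Psi(x)^*)=n\cdot\tfrac1{|G|}\sum_x\langle\ph(x),\Psi(x)\rangle\ge t(c)n$ is real; expanding $\Vert\ph(g)-\Psi(g)\Vert_2^2$ and averaging over $g$ therefore gives $\tfrac1{|G|}\sum_g\Vert\ph(g)-\Psi(g)\Vert_2^2\le 2\big(1-t(c)\big)=:\delta^2$, where $\delta=O(\eps)$ because $t(c)=\tfrac{c^8}{(2-c)^8}\to1$ as $\eps\to0$.

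The crux, and the main obstacle, is to upgrade this \emph{average} estimate to the \emph{pointwise} bound demanded by the theorem, and to do so with a loss linear rather than merely of order $\sqrt\eps$. Here I would use the smearing identity: for fixed $g$, averaging the approximations $\ph(g)\approx\ph(gh)\ph(h)^*$ gives $\Vert\ph(g)-\tfrac1{|G|}\sum_h\ph(gh)\ph(h)^*\Vert_2<\eps$. Replacing each factor $\ph$ by $\Psi$ and estimating the two resulting error terms by Cauchy--Schwarz against the average bound costs $2\delta$, so $\ph(g)$ is within $\eps+2\delta$ of $\tfrac1{|G|}\sum_h\Psi(gh)\Psi(h)^*=U^*\pi(g)QU$, where $Q=\tfrac1{|G|}\sum_h\pi(h)VV^*\pi(h)^*$ satisfies $0\le Q\le I$ and $\Tr(Q)=\Tr(VV^*)=n$. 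Finally $\Vert U^*\pi(g)QU-U^*\pi(g)U\Vert_2\le\Vert(I-Q)U\Vert_2$, and since $(I-Q)^2\le I-Q$ this is at most $\big(\tfrac1n\Tr((I-Q)UU^*)\big)^{1/2}\le\big(\tfrac1n\Tr(I-Q)\big)^{1/2}=\big(\tfrac{m-n}{n}\big)^{1/2}$. It is precisely the dimension control $m-n\le\big((1-4\eps^2)^{-1}-1\big)n=O(\eps^2n)$ coming from the range of $m$ that keeps this last term of order $\eps$; without it the naive trace-to-norm passage would lose a square root. Collecting the three contributions yields $\Vert\ph(g)-U^*\pi(g)U\Vert_2\le\eps+2\delta+\tfrac{2\eps}{\sqrt{1-4\eps^2}}$, which careful bookkeeping of the constants (using $\eps<\tfrac1{16}$) keeps below $42\eps$. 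I expect the one genuinely delicate point to be the justification that Theorem \ref{ghmain} may be invoked in the isometry branch $m\ge n$ with $m$ as tight as the stated range requires; it is the Stinespring-dilation construction underlying that theorem which makes $m\ge n$ together with $\Vert U\Vert_{\mathrm{op}},\Vert V\Vert_{\mathrm{op}}\le1$ available.
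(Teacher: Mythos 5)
Your route is not the paper's. The paper deduces Theorem \ref{gohat} from its amenable stability theorem (Theorem \ref{epso}, itself a consequence of the Stinespring-based Theorem \ref{main}), getting a partial isometry $U$ with $\Vert 1_\mathcal M-U^*U\Vert_2<40\eps$, $\Vert P-UU^*\Vert_2<30\eps$, translating these into the rank window $|\rank(P)-n|<2500\eps^2n$, and then repairing the dimension mismatch by hand; the constants it reaches are $161\eps$ (case $\rank P\ge n$) resp.\ about $131\eps$ (case $\rank P\le n$), and the paper says explicitly that its estimates differ from the original ones. You instead run the original Gowers--Hatami deduction from the inverse theorem \ref{ghmain}, and most of it checks out: the estimate $\Vert\ph(xy^{-1}z)-\ph(x)\ph(y)^*\ph(z)\Vert_2<2\eps$ correctly yields $c=1-2\eps^2$; since $\bigl(\tfrac{1-2\eps^2}{1+2\eps^2}\bigr)^8\ge(1-4\eps^2)^8\ge1-32\eps^2$, your $\delta$ is at most $8\eps$; the smearing identity, the operator $Q=\tfrac1{|G|}\sum_h\pi(h)VV^*\pi(h)^*$ with $0\le Q\le I$, $\Tr Q=n$, $[Q,\pi(g)]=0$, and the passage $\Tr((I-Q)UU^*)\le\Tr(I-Q)=m-n$ are all sound, and you correctly isolate that the tight bound $m\le(1-4\eps^2)^{-1}n$ is what keeps the final term linear in $\eps$. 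The three contributions then total roughly $20\eps<42\eps$, so the bookkeeping is not in danger.

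The genuine gap is the one you flag, but your proposed resolution of it is wrong. Theorem \ref{ghmain} does not let you ``take the isometry branch'': it hands you some $m\in[\tfrac{c}{2-c}n,\tfrac{2-c}{c}n]$, and since $\tfrac{c}{2-c}<1$ you may receive $m<n$ with co-isometries, while the conclusion of Theorem \ref{gohat} demands $m\ge n$ and an isometry. Nor does the Stinespring dilation supply $m\ge n$: nothing forces the dilation space to be larger, and indeed in the paper's own proof the case $\rank P<n$ genuinely occurs and is handled by enlarging $P$ to a rank-$n$ projection $Q$, extending $\rho$ by the identity on $Q-P$ and the partial isometry to a unitary. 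The same padding closes your gap. If $m<n$, replace $\pi$ by $\pi\oplus 1_{n-m}$ acting on $\CC^n$ and extend $U$ to a unitary $U'\colon\CC^n\to\CC^m\oplus\CC^{n-m}$ whose first component is $U$ and whose second component $W_U$ satisfies $W_U^*W_U=I_n-U^*U$ (possible since $I_n-U^*U$ is a projection of rank $n-m$ when $U$ is a co-isometry); likewise for $V$. The correlation changes by the single term $\tfrac1{|G|}\sum_x\Tr(\ph(x)W_V^*W_U)$, which you may either make of nonnegative real part by flipping the phase of $W_U$, or bound crudely in modulus by $n-m\le 4\eps^2n$, a negligible perturbation of $t(c)$. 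After this, $m=n$, $Q=I$, your final term vanishes, and the rest of your argument goes through verbatim; once patched, your route actually recovers the literal statement (range of $m$ and the constant $42\eps$, with room to spare), which the paper's own proof does only up to constants.
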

In the same way as Gowers and Hatami, we deduce a stability result for amenable groups with respect to unitary groups of von Neumann algebras equipped with any unitarily invariant, ultraweakly lower semi-continuous semi-norm (see Definition \ref{normi}). In particular, our result encompasses the results mentioned in this section.
Note that the above theorem does not quite answer Question \ref{juppo}.
If $n<\frac{1-4\eps^2}{4\eps^2}$, it follows that $m=n$, but, as Gowers and Hatami point out, in order to get a result which holds uniformly for all $n$ and $\eps$ idependently, one needs to allow the dimension of the approximating representation $\pi$ to differ from the dimension of $\ph$. This is, loosely speaking, because the normalised trace norm is insensitive to low-dimensional pertubations.
More precisely, let $G$ be a countable, discrete, amenable group with left-invariant mean $\EE$ and let $\mathcal M$ be a finite factor equipped with the 2-norm $\Vert\cdot\Vert_2$ associated with the faithful, normal tracial state. Let $\pi\colon G\to\mathcal U(\mathcal M)$ be a representation such that $\pi(G)$ generates $\mathcal M$ as a von Neumann algebra and let $P\in\mathcal M$ be a projection with $\tau(P)=1-\eps$ for some $0<\eps<\frac{1}{4}$. Then the cutdown $\ph\colon G\to P\mathcal M P$ given by $\ph(g)=P\pi(g)P,g\in G$ satisfies $\Vert \ph(gh)-\ph(g)\ph(h)\Vert_{2}<\eps$, but cannot be close to any representation $\rho\colon G\to\mathcal U(P\mathcal MP)$. Indeed, if such a $\rho$ existed with $\Vert \ph(g)-\rho(g)\Vert_2<\frac{1}{2}$ for all $g\in G$, then the operator $\EE_x\rho(x)\pi(x)^*\in P\mathcal M$ would be a non-zero intertwiner of the representations $\rho$ and $\pi$, but since $\pi$ is a factor representation, this is only possible if $P=1_\mathcal M$ which is not the case since $\eps>0$. To paraphrase Gowers and Hatami: the representation that approximates $\ph$ is in some sense $\pi$, but $\pi$ happens to be of the wrong dimension.
Note that $\ph$ does not take values in $\mathcal U(P\mathcal MP)$, but this can be corrected for since $\Vert \ph(g)^*\ph(g)-P\Vert_2 <\eps$. Also, the fact that $\Vert\cdot \Vert_2$ is not normalized on $P\mathcal MP$ can be corrected for by replacing $\Vert\cdot\Vert_2$ with $\frac{1}{\sqrt{1-\eps}}\Vert \cdot\Vert_2$.
Bottom line is, that if $G$ is an amenable group with either irreducible representations of arbitrarily high dimension or a representation whose image generates a (necessarily hyperfinite) $\mathrm{II}_1$ factor, then there are non-trivial $\eps$-representations of $G$, but the above result of \cite{gh} and our generalization, Theorem \ref{epso}, show that the above construction is the only way of producing non-trivial $\eps$-representations of amenable groups.

\section{Preliminaries}\label{prol}
Throughout this article, all groups considered are assumed to be discrete and countable. Hilbert spaces are assumed separable and thus von Neumann algebras have separable preduals. For a Hilbert space $\mathcal H$, we let $\mathbb B(\mathcal H)$ denote the algebra of bounded linear operators on $\mathcal H$. If $\dim\mathcal H=n$, we write $\mathbb M_n=\mathbb B(\mathcal H)$.
Given a subset $\mathcal S\del\mathbb B(\mathcal H)$, we let $\mathcal S'$ denote the \emph{commutant} of $\mathcal S$, that is, the set of operators $T\in\mathbb B(\mathcal H)$ that commute with all $S\in \mathcal S$, i.\,e. $ST=TS$. For a projection $P\in\mathbb B(\mathcal H)$, we let $P^\perp:=1_{\mathcal H}-P$ be the projection on the orthogonal complement of $P\mathcal H$. 
Whenever we consider an amenable group $G$, we will implicitly fix a \emph{symmetric bi-invariant mean} $\mathbb E\in\ell^\infty(G)^*$, and we shall write $\mathbb E_xf(x):=\mathbb E(f)$, i.e., $\EE(1)=1$, $\EE(f)\geq 0$ for $f\geq 0$ and
\[\EE_xf(x)=\EE_xf(gx)=\EE_xf(xg)=\EE_xf(x^{-1}),\qquad g\in G\tag{\dag}\label{li}.\]

Let $\mathcal M$ be a von Neumann algebra. We let $\mathcal U(\mathcal M)$ denote the unitary group of $\mathcal M$. If $\mathcal M=\mathbb M_n$, we write $\mathcal U_n=\mathcal U(\mathcal M)$. Since we will be dealing different norms, we will use $\Vert\cdot\Vert_{\mathrm{op}}$ to denote the operator norm on $\mathcal M$. We define $\MM  :=\mathcal M\bar\otimes \mathbb B(\ell^2(\NN))$
and view $\mathcal M$ as a corner of $\MM$. More precisely, we implicitly fix a rank 1-projection $E\in\mathbb B(\ell^2(\NN))$ and identify 
\[\mathcal M\simeq (1_\mathcal M\otimes E)\mathcal M_\infty(1_\mathcal M\otimes E),\]
where $1_\mathcal M$ is the unit of $\mathcal M$. Consistent with this identification, we write $1_\mathcal M$ instead of $1_\mathcal M\otimes E$. We denote the unit of $\MM  $ by $1_\infty$.
Recall that  $\mathcal M$  is the dual of the Banach space of its normal functionals, which we will denote $\mathcal M_*$.
The associated weak* topology on $\mathcal M$ is called the \emph{ultraweak} or $\sigma$\emph{-weak} topology.
Given an amenable group $G$ and a map $\ph\colon G\to \mathcal M$ such that $\sup_{x\in G}\Vert \ph(x)\Vert_{\mathrm{op}}<\infty$, we can define the mean
$\mathbb E_x\ph(x)\in\mathcal M\simeq (\mathcal M_*)^*$ by the formula
\[f(\mathbb E_x\ph(x))=\mathbb E_x f(\ph(x)),\qquad f\in\mathcal M_*.\]
The characteristic function on an interval $[a,b]\del \RR$ we denote by $\chi_{[a,b]}$.
We also recall the following definition.
\begin{defi}
Let $G$ be a group and let $\mathcal H$ be a Hilbert space. A map $\ph\colon G\to\mathbb \mathbb B(\mathcal H)$ is called \emph{positive definite} if for all finite sets $F\del G$, the matrix $[\ph(xy^{-1})]_{x,y\in F}\in\mathbb B(\mathcal H^{\oplus|F|})$ is positive as an operator, i.e.,
if
\[\sum_{x,y\in F}\langle \ph(xy^{-1})\xi_y,\xi_x\rangle\geq 0,\]
for all $\xi_x\in \mathcal H,x\in F$.
\end{defi}

We start out with a rather simple observation, which turns out to be central to this paper. The relevance of the following proposition to stability of $\eps$-representations was noted by Shtern in \cite{shtern}.
\begin{prop}\label{snakeispos}
Let $G$ be an amenable group and let $\ph\colon G\to  \mathcal M$ be given such that $\sup_{x\in G}\Vert\ph(x)\Vert_\infty<\infty$. Then the map $\tilde\ph\colon G\to\mathcal M$ defined by
\[\tilde\ph(x)=\mathbb E_y\ph(xy)\ph(y)^*,\qquad x\in G\]
is positive definite.
\end{prop}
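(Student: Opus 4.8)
The plan is to verify the defining inequality for positive definiteness directly. Realise $\mathcal M$ concretely as a von Neumann algebra acting on a Hilbert space $\mathcal H$, fix a finite set $F\del G$ and vectors $\xi_x\in\mathcal H$ for $x\in F$, and aim to show that
\[\sum_{x,y\in F}\langle \tilde\ph(xy^{-1})\xi_y,\xi_x\rangle\geq 0.\]
The key observation is that the summand can be turned into a quantity that is manifestly nonnegative once we bring the mean outside and rewrite $\tilde\ph$ in a symmetric form.

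First I would fix $x,y\in F$ and pull the vector functional $T\mapsto\langle T\xi_y,\xi_x\rangle$ through the mean. Since this functional is normal (ultraweakly continuous), the defining property of $\EE$ applied to the bounded function $z\mapsto \ph(xy^{-1}z)\ph(z)^*$ (bounded because $\sup_x\Vert\ph(x)\Vert_\infty<\infty$) gives $\langle\tilde\ph(xy^{-1})\xi_y,\xi_x\rangle=\EE_z\langle \ph(xy^{-1}z)\ph(z)^*\xi_y,\xi_x\rangle$. Now applying the scalar left-invariance \eqref{li} with the substitution $z\mapsto yz$ converts this into the symmetric expression $\EE_z\langle \ph(xz)\ph(yz)^*\xi_y,\xi_x\rangle$. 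Summing over the finite index set $F\times F$ and using linearity of $\EE$ yields
\[\sum_{x,y\in F}\langle \tilde\ph(xy^{-1})\xi_y,\xi_x\rangle=\EE_z\Big(\sum_{x,y\in F}\langle \ph(xz)\ph(yz)^*\xi_y,\xi_x\rangle\Big).\]

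It then remains to see that the integrand is nonnegative for every fixed $z$. Setting $\eta_x:=\ph(xz)^*\xi_x$ and moving the adjoint across the inner product, we have $\langle \ph(xz)\ph(yz)^*\xi_y,\xi_x\rangle=\langle \eta_y,\eta_x\rangle$, so the inner double sum collapses to a single square norm,
\[\sum_{x,y\in F}\langle \eta_y,\eta_x\rangle=\Big\Vert \sum_{x\in F}\ph(xz)^*\xi_x\Big\Vert^2\geq 0.\]
Because $z\mapsto\Vert\sum_{x\in F}\ph(xz)^*\xi_x\Vert^2$ is a bounded, nonnegative function and $\EE$ is a positive functional, its mean is nonnegative, which is exactly the inequality we wanted.

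The only genuinely delicate points are the two exchanges in the second paragraph: one must invoke the normality of vector functionals in order to interchange $\EE$ with the inner product, and one must carry out the change of variables \emph{term by term} for each fixed pair $(x,y)$, since the shift $z\mapsto yz$ depends on $y$ and therefore cannot be performed uniformly under the sum. Pulling the scalar functional out first, before applying \eqref{li}, sidesteps any question about invariance of the $\mathcal M$-valued mean. Everything else is formal bookkeeping with adjoints, and I expect no further obstacle.
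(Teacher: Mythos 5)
Your proof is correct and follows essentially the same route as the paper's: pull the (normal) vector functional through the mean, use invariance of $\EE$ under the shift $z\mapsto yz$ to symmetrize the expression, collapse the double sum into the square norm $\Vert\sum_{x\in F}\ph(xz)^*\xi_x\Vert^2$, and finish with positivity of $\EE$. The extra care you take about applying the change of variables term by term and about normality of vector functionals is exactly what the paper's displayed computation does implicitly.
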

\begin{proof}
Let $F\del G$ be finite and let $\xi_x\in \mathcal H,x\in F$. Then 
\begin{align*}
\sum_{x,y\in F}\langle \tilde\ph(xy^{-1})\xi_y,\xi_x\rangle&=\sum_{x,y\in F}\mathbb E_z\langle \ph(xy^{-1}z)\ph(z)^*\xi_y,\xi_x\rangle
\\&=\sum_{x,y\in F}\mathbb E_z\langle\ph(xz)\ph(yz)^*\xi_y,\xi_x\rangle
\\&=\sum_{x,y\in F}\mathbb E_z\langle\ph(yz)^*\xi_y,\ph(xz)^*\xi_x\rangle
\\&=\mathbb E_z\langle\sum_{y\in F}\ph(yz)^*\xi_y,\sum_{x\in F}\ph(xz)^*\xi_x\rangle\geq 0,
\end{align*}
since $\langle \xi,\xi\rangle\geq 0$ for all $\xi\in \mathcal H$ and $\mathbb E$ is positive.
\end{proof}
A fundamental fact about positive definite maps is \emph{Stinespring's dilation theorem}. We will use a formulation which is essentially Theorem 3 in \cite{kasp}. For the reader's convenience, we recall the proof.
\begin{theorem}[Kasparov, \cite{kasp}]\label{sdt}
Let $G$ be a group and let $\mathcal M$ be a von Neumann algebra.
For every positive definite map $$\ph\colon G\to\mathcal M\simeq 1_{\mathcal M}\MM 1_\mathcal M$$ there exist $U\in \MM  1_\mathcal M$ and a representation $\pi\colon G\to\mathcal U(\MM  )$ such that
\[\ph(g)=U^*\pi(g)U,\qquad g\in G.\]
In particular $\Vert U\Vert_{\mathrm{op}}^2=\Vert\ph(1)\Vert_{\mathrm{op}}.$
\end{theorem}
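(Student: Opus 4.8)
The plan is to run a Stinespring/GNS-type construction attached to the positive definite map $\ph$ and then fit the resulting data into the ampliation $\MM=\mathcal M\bar\otimes\mathbb B(\ell^2(\NN))$. Let $\mathcal H$ denote the Hilbert space on which $\mathcal M$ is (faithfully, normally) represented, identified with the corner $1_\mathcal M(\mathcal H\otimes\ell^2(\NN))=\mathcal H\otimes E\ell^2(\NN)$, so that $\MM$ acts on $\mathcal H\otimes\ell^2(\NN)$. On the space of finitely supported functions $\xi\colon G\to\mathcal H$ I would define the sesquilinear form
\[\langle\xi,\eta\rangle_\ph=\sum_{x,y\in G}\langle\ph(xy^{-1})\xi(y),\eta(x)\rangle.\]
Positive definiteness of $\ph$ is exactly the assertion that this form is positive semi-definite, and the case $|F|=2$ forces $\ph(g^{-1})=\ph(g)^*$, which makes the form Hermitian. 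Quotienting by the null space and completing yields a Hilbert space $K$, separable because $G$ is countable and $\mathcal H$ is separable; I write $[\xi]$ for the class of $\xi$, and $U_g\zeta:=[\delta_g\otimes\zeta]$.

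Next I would exhibit the two commuting symmetries of the form. Right translation $(\rho(g)\xi)(x):=\xi(xg)$ preserves $\langle\cdot,\cdot\rangle_\ph$, as one sees by reindexing the sum via $x\mapsto xg^{-1},\ y\mapsto yg^{-1}$; hence it descends to unitaries $\pi(g)$ on $K$ with $\pi(g)\pi(h)=\pi(gh)$. The pointwise action $(\mu(b)\xi)(x):=b\xi(x)$ for $b\in\mathcal M'$ also preserves the form, since $b$ commutes with each $\ph(xy^{-1})\in\mathcal M$, giving a representation $\mu\colon\mathcal M'\to\mathbb B(K)$ commuting with $\pi(G)$. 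A direct computation gives the reconstruction: $\pi(g)U_e\zeta=U_{g^{-1}}\zeta$, so that $\langle U_e^*\pi(g)U_e\zeta,\zeta'\rangle=\langle U_{g^{-1}}\zeta,U_e\zeta'\rangle_\ph=\langle\ph(g)\zeta,\zeta'\rangle$, i.e. $U_e^*\pi(g)U_e=\ph(g)$; in particular $U_e^*U_e=\ph(1)$, whence $\Vert U_e\Vert_{\mathrm{op}}^2=\Vert\ph(1)\Vert_{\mathrm{op}}$. Moreover $U_e b=\mu(b)U_e$, so $U_e$ intertwines the two $\mathcal M'$-actions.

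The remaining, genuinely operator-algebraic, task is to transport $(K,\mu,\pi,U_e)$ into $\MM$, and this is where I expect the main obstacle. First I would verify that $\mu$ is a \emph{normal} representation of $\mathcal M'$: for finitely supported $\xi$ the functional $b\mapsto\langle\mu(b)\xi,\xi\rangle_\ph$ is a finite sum of vector functionals on $\mathcal M'\subseteq\mathbb B(\mathcal H)$, hence ultraweakly continuous. Then I would invoke the multiplicity theory of normal representations: since the identity representation of $\mathcal M'$ on $\mathcal H$ is faithful and normal, its infinite ampliation $b\mapsto b\otimes 1$ on $\mathcal H\otimes\ell^2(\NN)$ absorbs every separable normal representation of $\mathcal M'$ as a subrepresentation. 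This produces an isometry $V\colon K\to\mathcal H\otimes\ell^2(\NN)$ with $V\mu(b)=(b\otimes1)V$, whose range is $(\mathcal M'\otimes1)$-invariant. I may therefore set $\tilde\pi(g):=V\pi(g)V^*\oplus 1_{(VK)^\perp}$, a unitary representation of $G$ on $\mathcal H\otimes\ell^2(\NN)$ commuting with $\mathcal M'\otimes1$, so that $\tilde\pi(g)\in(\mathcal M'\otimes1)'=\MM$. Putting $U:=VU_e$, the intertwining relation $Ub=(b\otimes1)U$ together with $U=U1_\mathcal M$ places $U$ in $\MM 1_\mathcal M$, while $U^*\tilde\pi(g)U=U_e^*\pi(g)U_e=\ph(g)$ and $\Vert U\Vert_{\mathrm{op}}^2=\Vert\ph(1)\Vert_{\mathrm{op}}$ are inherited from the GNS step. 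The delicate points are the normality of $\mu$ and the absorption statement (equivalently, that $\mu$ is quasi-contained in the identity representation of $\mathcal M'$, which holds automatically since the latter is faithful), together with the bookkeeping of the corner identification $\mathcal M=1_\mathcal M\MM 1_\mathcal M$ ensuring that $U$ genuinely lies in $\MM 1_\mathcal M$.
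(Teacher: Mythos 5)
Your proposal is correct and takes essentially the same route as the paper's proof: the same GNS/Stinespring construction (your kernel $\ph(xy^{-1})$ with right translations is the mirror image of the paper's $\ph(y^{-1}x)$ with left translations), the same normal action of the commutant $\mathcal M'$, and the same amplification--induction step yielding the isometry $V$, the representation $V\pi(\cdot)V^* + 1 - VV^* \in (\mathcal M'\otimes 1)' = \MM$, and $U = VU_e \in \MM 1_\mathcal M$. The only point to tighten is your claim that the pointwise action of $b\in\mathcal M'$ ``preserves the form,'' which is literally true only for unitary $b$; well-definedness and boundedness of $\mu(b)$ for general $b$ require either decomposing $b$ into unitaries or the paper's explicit estimate $0\leq \diag(b^*)\left[\ph(xy^{-1})\right]\diag(b)\leq \Vert b\Vert_{\mathrm{op}}^2\left[\ph(xy^{-1})\right]$.
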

\begin{proof}
Let $\mathcal M\del\mathbb B(\mathcal H)$ be a normal representation of $\mathcal M$ and consider the
 vector space $\mathcal A:=C_{\mathrm{fin}}(G,\mathcal H)$ of finitely supported maps $G\to\mathcal H$ equipped with the sequilinear form
\[\langle f,g\rangle_\ph=\sum_{x,y\in G} \langle \ph(y^{-1}x)f(x),g(y)\rangle_{\mathcal H},\]
for $f,g\in \mathcal A$. By positive definiteness of $\ph$, this is a positive semidefinite sequilinear form, so by separation and completion, we get a Hilbert space
$\mathcal{\tilde{H}}$ where $G$ acts as unitaries by the formula
\[\pi_0(g)[f]=[g.f], \qquad f\in \mathcal A,\]
where $g.f(x):=f(g^{-1}x)$ is the left translation action and $[f]$ denotes the equivalence class of $f$.
Furthermore, let $U_0\colon \mathcal H\to \mathcal{\tilde H}$ be given by
\[U_0(\xi)=[\delta_e\xi],\qquad \xi\in\mathcal H.\]
One sees straightforwardly that $U_0^*([f])=\sum_{x\in G}\ph(x)f(x)$ for $f\in\mathcal A$
and from this it is clear that
\[\ph(g)=U_0^*\pi_0(g)U_0.\]
We also define an action of the commutant $\mathcal M'\del\mathbb B(\mathcal H)$ on $\mathcal A$ by
\[\rho(T)[f]=[Tf],\qquad T\in\mathcal M',f\in\mathcal A.\]
In order to extend $\rho$ to a normal representation of $\mathcal M'$ on $\mathcal{\tilde H}$, we have to check that $\rho$ is well-defined and bounded. Note that since $T$ commutes with $\ph$ and $[\ph(xy^{-1})]_{x,y\in F}$ is positive for any $F$, the operator $S:=\diag(T^*)[\ph(xy^{-1})]_{x,y\in F}\diag(T)$ is positive; in fact $0\leq S\leq \Vert T\Vert^2_{\mathrm{op}}[\ph(xy^{-1})]_{x,y\in F}$, so
\begin{align*}
\Vert[Tf]\Vert_{\mathcal{\tilde H}}^2&=\sum_{x,y\in F}\langle T^*\ph(y^{-1}x)Tf(x),f(y)\rangle\\
&=\langle Sf,f\rangle_\ph
\leq \Vert T\Vert_{\mathrm{op}}^2\sum_{i,j}\langle\ph(y^{-1}x)f(x),f(y)\rangle,
\end{align*}
where $F=\supp f$,
so $\rho(T)$ extends to an operator on $\mathcal{\tilde H}$. It is now easy to see that $\rho$ is a normal representation of $\mathcal M'$. Thus, by the representation theory for von Neumann algebras, there is an isometry $V:\mathcal{\tilde H}\to \mathcal H\otimes \ell^2$ such that $\rho(T)=V^*(T\otimes 1)V$ and $VV^*\in(\mathcal M'\otimes 1)'$. Clearly $\pi_0(G)\del\rho(\mathcal M')'$, so
\[\pi(g):=V\pi_0(g)V^*+ 1-VV^*\in (\mathcal M'\otimes 1)'=\MM  ,\]
is a unitary representation of $G$ which, together with the map $U=VU_0$, has the desired properties.
\end{proof}

We will consider a special class of semi-norms on the von Neumann algebra $\mathcal M$.

\begin{defi}\label{normi}
Let $\mathcal M$ be a von Neumann algebra. A \textbf{unitarily invariant semi-norm} on $\mathcal M$ is a semi-norm $\Vert\cdot\Vert$ on an (algebraic) ideal $\mathcal A\del\mathcal M$ such that for all $U,V\in\mathcal U(\mathcal M)$ and $T\in\mathcal M$ it holds that
\[\Vert UTV\Vert=\Vert T\Vert.\tag{$\circ$}\label{ui}\]

The semi-norm $\Vert\cdot\Vert$ is called \textbf{ultraweakly lower semi-continuous} if the unit ball $\{T\in \mathcal M\mid \Vert T\Vert\leq 1\}$ is closed in the ultraweak toplogy.
\end{defi}

We consider such seminorms as defined on all of $\mathcal M$ by assigning the value $\infty$ outside of the ideal $\mathcal A$.
An important example of unitarily invariant ultraweakly lower semi-continuous semi-norms occurs in the case where $\mathcal M$ is a semi-finite von Neumann algebra equipped with a normal trace $\tau$. In this case, we define the $p$-semi-norms by
\[\Vert T\Vert_p:=\tau(|T|^{p})^{1/p},\qquad T\in\mathcal M.\]
The tracial property implies unitary invarance and the fact that $\tau$ is normal implies that $\Vert\cdot\Vert_p$ is ultraweakly lower semi-continuous. If $\tau$ is faithful, this is a norm.
In the following, we will list the basic properties of ultraweakly semi-continuous, unitarily invariant semi-norms that we will use throughout this paper. We start by a basic proposition.

\begin{prop}\label{headache}
Let $R,S\in\mathcal M$ and assume $0\leq R\leq S$. Then there exists $T\in\mathcal M$ with $\Vert T\Vert_{\mathrm{op}}\leq 1$ and $R=S^{1/2}TS^{1/2}$.
\end{prop}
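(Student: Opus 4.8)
The plan is to recognise this as the \emph{Douglas factorisation lemma}, the only real content beyond the Hilbert-space version being that the factor must be kept inside $\mathcal M$. Fix a normal representation $\mathcal M\del\mathbb B(\mathcal H)$ and set $A:=R^{1/2}$ and $B:=S^{1/2}$, which lie in $\mathcal M$ by Borel functional calculus and are positive and self-adjoint. The hypothesis $0\leq R\leq S$ reads, for every $\xi\in\mathcal H$, as $\Vert A\xi\Vert^2=\langle R\xi,\xi\rangle\leq\langle S\xi,\xi\rangle=\Vert B\xi\Vert^2$. This norm domination is exactly what is needed to produce a contraction intertwining $A$ and $B$.

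Concretely, I would define a map $D$ on the dense subspace $\ran B\del\overline{\ran B}$ by $D(B\xi)=A\xi$ and extend it by $0$ on $\ker B=(\overline{\ran B})^\perp$. The inequality above shows simultaneously that $D$ is well-defined (if $B\xi=B\xi'$ then $\Vert A(\xi-\xi')\Vert\leq\Vert B(\xi-\xi')\Vert=0$) and that $\Vert D\Vert_{\mathrm{op}}\leq 1$; since $\overline{\ran B}\perp\ker B$ the two pieces assemble to a global contraction. By construction $DB=A$, and taking adjoints (using $A^*=A$, $B^*=B$) gives $B D^*=A$. Setting $C:=D^*$ we obtain $R^{1/2}=A=BC=S^{1/2}C$ with $\Vert C\Vert_{\mathrm{op}}\leq 1$, whence
\[R=AA^*=(BC)(BC)^*=B\,CC^*\,B^*=S^{1/2}\big(CC^*\big)S^{1/2}.\]
Thus $T:=CC^*$ satisfies $R=S^{1/2}TS^{1/2}$, and $\Vert T\Vert_{\mathrm{op}}=\Vert C\Vert_{\mathrm{op}}^2\leq 1$ (indeed $0\leq T$).

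The step I expect to be the main obstacle is verifying that $T$ genuinely lies in $\mathcal M$ rather than merely in $\mathbb B(\mathcal H)$, since the Douglas construction is carried out at the level of the ambient Hilbert space. I would handle this through the bicommutant theorem by showing $D\in\mathcal M''=\mathcal M$. It suffices to check that $D$ commutes with every unitary $W\in\mathcal M'$, as $\mathcal M'$ is spanned by its unitaries. Such a $W$ commutes with both $A$ and $B$, so on $\ran B$ one computes $WD(B\xi)=WA\xi=AW\xi=D(BW\xi)=DW(B\xi)$, giving $WD=DW$ on $\overline{\ran B}$ by continuity; and since $W$ is unitary it preserves the decomposition $\mathcal H=\overline{\ran B}\oplus\ker B$, on the second summand both $WD$ and $DW$ vanish. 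Hence $WD=DW$ everywhere, so $D\in\mathcal M$, and therefore $C=D^*\in\mathcal M$ and $T=CC^*\in\mathcal M$, completing the proof.
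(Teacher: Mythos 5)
Your proof is correct, but it takes a genuinely different route from the paper's. You run the classical Douglas factorization at the level of the ambient Hilbert space -- defining the contraction $D$ densely by $D(S^{1/2}\xi)=R^{1/2}\xi$ and extending by $0$ on $\ker S^{1/2}$ -- and then import $D$ into $\mathcal M$ via the bicommutant theorem, checking commutation with each unitary $W\in\mathcal M'$. (One small wording point: the invariance of the decomposition $\mathcal H=\overline{\ran B}\oplus\ker B$ comes from the fact that $W$ commutes with $B=S^{1/2}$, not from unitarity alone; you do have both facts in hand, so the argument stands.) The paper instead never leaves $\mathcal M$: it forms the spectral truncations $A_n:=\chi_{[\frac{1}{n},\infty)}(S^{1/2})S^{-1/2}\in\mathcal M$, observes the uniform bound $\Vert R^{1/2}A_n\Vert_{\mathrm{op}}\leq 1$, extracts an ultraweak limit point $A\in\mathcal M$ of the net $R^{1/2}A_n$, verifies $AS^{1/2}=R^{1/2}$ along a subnet using $\Vert R^{1/2}\chi_{\{0\}}(S^{1/2})\Vert_{\mathrm{op}}=0$ to control the kernel, and sets $T:=A^*A$. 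Thus the membership $T\in\mathcal M$, which you rightly single out as the main obstacle, is obtained in the paper for free from the ultraweak compactness of the unit ball of $\mathcal M$, at the price of a subnet argument and explicit bookkeeping of the spectral projections of $S^{1/2}$; your approach buys a net-free, constructive contraction (the canonical Douglas factor) and needs no compactness, at the price of fixing a faithful normal representation and invoking the bicommutant theorem. The two constructions produce factors playing identical roles: your $T=CC^*=D^*D$ is the exact analogue of the paper's $T=A^*A$, with $D$ and $A$ both solving $XS^{1/2}=R^{1/2}$.
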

\begin{proof}
Let $A_n=\chi_{[\frac{1}{n},\infty)}(S^{1/2})S^{-1/2}$. We note that
\begin{align*}
\Vert R^{1/2}A_n\Vert_{\mathrm{op}}^2&
=\Vert A_nRA_n\Vert_{\mathrm{op}}
\leq \Vert A_nSA_n\Vert_{\mathrm{op}}
\\&=\Vert\chi_{[\frac{1}{n},\infty)}(S^{1/2})\Vert_{\mathrm{op}}\leq 1,
\end{align*}
so $R^{1/2}A_n$ has an ultraweak limit point, say $A\in\mathcal M$ with $\Vert A\Vert_{\mathrm{op}}\leq 1$. We also note that the increasing sequence $\chi_{[\frac{1}{n},\infty)}(S^{1/2})$ converges even strongly to $\chi_{(0,\infty)}(S^{1/2})$.
By a similar calculation as above, we have that
\[\Vert R^{1/2}\chi_{\{0\}}(S^{1/2})\Vert_{\mathrm{op}}\leq \Vert S^{1/2}\chi_{\{0\}}(S^{1/2})\Vert_{\mathrm{op}}=0,\]
so for some ultraweakly convergent subnet, we have that
\[AS^{1/2}=\lim_{\alpha}R^{1/2}\chi_{[\frac{1}{n_\alpha},\infty)}(S^{1/2})=R^{1/2}\chi_{(0,\infty)}(S^{1/2})=R^{1/2}.\]
Letting $T=A^*A$, we reach the desired conclusion.
\end{proof}

\begin{prop}\label{yum}
Let $\mathcal M$ be a von Neumann algebra and let $\Vert\cdot\Vert$ be a unitarily invariant semi-norm on $\mathcal M$. Then, for all $R,S,T\in\mathcal M$, we have that

\begin{gather}
\Vert RTS\Vert\leq \Vert R\Vert_{\mathrm{op}}\Vert T\Vert\Vert S\Vert_{\mathrm{op}},\tag{$\spadesuit$}\label{p1}
\\
\Vert T\Vert=\Vert T^*\Vert=\Vert |T|\Vert,\tag{$\diamondsuit$}\label{p2}
\\
\Vert T^*T\Vert=\Vert TT^*\Vert,\tag{$\clubsuit$}\label{p3}
\end{gather}
if $0\leq R\leq S,$ then
\begin{align}
\Vert R\Vert\leq \Vert S\Vert.\tag{$\heartsuit$}\label{p4}
\end{align}
\end{prop}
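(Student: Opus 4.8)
The plan is to establish \eqref{p1} first, since \eqref{p2}, \eqref{p3} and \eqref{p4} will all follow from it by polar decomposition and Proposition \ref{headache}. The whole argument rests on the defining property \eqref{ui} together with the triangle inequality and ultraweak lower semicontinuity. For \eqref{p1}, by homogeneity I may assume $\Vert R\Vert_{\mathrm{op}},\Vert S\Vert_{\mathrm{op}}\le 1$, and by factoring $RTS$ it suffices to prove the one-sided estimate $\Vert RT\Vert\le\Vert T\Vert$ whenever $\Vert R\Vert_{\mathrm{op}}\le 1$ (the estimate $\Vert TS\Vert\le\Vert T\Vert$ being symmetric), since then $\Vert RTS\Vert\le\Vert TS\Vert\le\Vert T\Vert$. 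The key point is that such a contraction $R$ is an ultraweak limit of convex combinations $R_\alpha=\sum_k\lambda_k^{(\alpha)}U_k^{(\alpha)}$ of unitaries $U_k^{(\alpha)}\in\mathcal U(\mathcal M)$: for self-adjoint $R$ this is immediate from $R=\tfrac12(U+U^*)$ with $U=R+i(1-R^2)^{1/2}\in\mathcal U(\mathcal M)$, and the general case follows from the Russo--Dye theorem. For each $\alpha$ the triangle inequality and \eqref{ui} give $\Vert R_\alpha T\Vert\le\sum_k\lambda_k^{(\alpha)}\Vert U_k^{(\alpha)}T\Vert=\Vert T\Vert$; since right multiplication by the fixed $T$ is ultraweakly continuous, $R_\alpha T\to RT$ ultraweakly, and ultraweak lower semicontinuity yields $\Vert RT\Vert\le\liminf_\alpha\Vert R_\alpha T\Vert\le\Vert T\Vert$.

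Granting \eqref{p1}, I would deduce \eqref{p3} and \eqref{p2} from the polar decomposition $T=V\lvert T\rvert$ with $V\in\mathcal M$ a partial isometry, $\Vert V\Vert_{\mathrm{op}}\le 1$, $\lvert T\rvert=V^*T$, and $V^*V$ the support projection of $\lvert T\rvert$. For \eqref{p3}, the identities $TT^*=V(T^*T)V^*$ and $T^*T=V^*(TT^*)V$ (valid because $V^*V$ fixes $T^*T=\lvert T\rvert^2$) combine with \eqref{p1} to give $\Vert TT^*\Vert\le\Vert T^*T\Vert$ and the reverse inequality, hence equality. For \eqref{p2}, applying \eqref{p1} to $T=V\lvert T\rvert$ and to $\lvert T\rvert=V^*T$ gives $\Vert T\Vert=\Vert\,\lvert T\rvert\,\Vert$; similarly $\Vert T^*\Vert=\Vert\,\lvert T^*\rvert\,\Vert$, while the standard identities $\lvert T^*\rvert=V\lvert T\rvert V^*$ and $\lvert T\rvert=V^*\lvert T^*\rvert V$ together with \eqref{p1} give $\Vert\,\lvert T\rvert\,\Vert=\Vert\,\lvert T^*\rvert\,\Vert$, so that $\Vert T\Vert=\Vert\,\lvert T\rvert\,\Vert=\Vert\,\lvert T^*\rvert\,\Vert=\Vert T^*\Vert$.

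Finally I would obtain \eqref{p4} from Proposition \ref{headache}, whose proof in fact produces a \emph{positive} contraction $T=A^*A$ with $R=S^{1/2}TS^{1/2}$ and $0\le T\le 1$. Writing $X=T^{1/2}S^{1/2}$, one has $R=X^*X$ and $XX^*=T^{1/2}ST^{1/2}$, so by \eqref{p3} and then \eqref{p1}, $\Vert R\Vert=\Vert X^*X\Vert=\Vert XX^*\Vert=\Vert T^{1/2}ST^{1/2}\Vert\le\Vert T^{1/2}\Vert_{\mathrm{op}}^2\Vert S\Vert=\Vert T\Vert_{\mathrm{op}}\Vert S\Vert\le\Vert S\Vert$.

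The main obstacle is \eqref{p1}: the representation of an arbitrary operator-norm contraction as a limit of convex combinations of unitaries. For self-adjoint contractions this is elementary, but a general contraction need not be normal, and its polar part is only a partial isometry, which cannot in general be completed to a unitary inside $\mathcal M$; this is precisely where the Russo--Dye theorem enters. A second subtlety is that the semi-norm need not be continuous for the operator norm, so one cannot simply pass to the limit in $\Vert R_\alpha T\Vert$; it is essential to use ultraweak lower semicontinuity together with the separate ultraweak continuity of multiplication on bounded sets.
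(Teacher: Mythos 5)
Your deductions of \eqref{p2}, \eqref{p3} and \eqref{p4} from \eqref{p1} are correct and essentially identical to the paper's: the same polar-decomposition identities are used, and your observation that the proof of Proposition \ref{headache} in fact produces a \emph{positive} contraction $T=A^*A$ (so that $T^{1/2}$ makes sense and $\Vert S^{1/2}TS^{1/2}\Vert=\Vert T^{1/2}ST^{1/2}\Vert$ follows from \eqref{p3}) is precisely what the paper uses implicitly when it writes that chain of equalities. The genuine gap is in your proof of \eqref{p1}. The proposition is stated for an \emph{arbitrary} unitarily invariant semi-norm; ultraweak lower semicontinuity is not among its hypotheses, and the paper stresses just before the mean lemma that lower semicontinuity is used \emph{only} there. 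Your argument needs it in an essential way: the classical Russo--Dye theorem exhibits a contraction $R$ only as a (norm, hence ultraweak) limit of convex combinations of unitaries, and to transfer the bound $\Vert R_\alpha T\Vert\le\Vert T\Vert$ to the limit you must invoke lower semicontinuity of the semi-norm. Without that hypothesis the limiting step genuinely fails, and you cannot repair it by estimating $\Vert (R-R_\alpha)T\Vert\le\Vert R-R_\alpha\Vert_{\mathrm{op}}\Vert T\Vert$ either, since that inequality is exactly \eqref{p1}, the statement being proved --- the argument would be circular. So as written you prove the proposition only for ultraweakly lower semicontinuous semi-norms, a strictly weaker statement than the one asserted (though, admittedly, sufficient for every application later in the paper).

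The fix, which is the paper's route, is to use the Kadison--Pedersen strengthening of Russo--Dye \cite{rkgkp}: every element of $\mathcal M$ of operator norm strictly less than $1$ is a \emph{finite} convex combination of unitaries of $\mathcal M$. Then for $\Vert R\Vert_{\mathrm{op}},\Vert S\Vert_{\mathrm{op}}<1$ the bound $\Vert RTS\Vert\le\Vert T\Vert$ follows from the triangle inequality and \eqref{ui} alone, with no limits in $\mathcal M$ at all; for general $R,S$ one applies this to $R'=(\Vert R\Vert_{\mathrm{op}}+\eps)^{-1}R$ and $S'=(\Vert S\Vert_{\mathrm{op}}+\eps)^{-1}S$ and lets $\eps\to 0$, a limit of real numbers which requires no continuity property of the semi-norm. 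With this replacement for your limit argument, the rest of your write-up becomes a correct proof of the full statement.
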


\begin{proof}
We begin with the proof of \eqref{p1}.
First assume $\Vert R\Vert_{\mathrm{op}},\Vert S\Vert_{\mathrm{op}}< 1$. By (a strengthening of) the Russo-Dye Theorem (see \cite{rkgkp}), $R$ and $S$ are convex combinations of unitaries in $\mathcal M$, that is, $R=\sum_{i=1}^n\lambda_iU_i$ and $S=\sum_{i=1}^m\mu_iV_i$ with $\lambda_i,\mu_i\in [0,1],U_i,V_i\in\mathcal U(\mathcal M)$ and $\sum_{i=1}^n\lambda_i=\sum_{i=1}^m\mu_i=1$.
Thus 
\[\Vert RTS\Vert\leq \sum_{i=1}^n\sum_{j=1}^m\lambda_i\mu_j\Vert U_iTV_j\Vert=\sum_{i=1}^n\lambda_i\sum_{j=1}^m\mu_j\Vert T\Vert=\Vert T\Vert.\]
Now let $R$ and $S$ be arbitrary and let $\eps>0$. Then $R':=(\Vert R\Vert_{\mathrm{op}}+\eps)^{-1}R$ and $S':=(\Vert S\Vert_{\mathrm{op}}+\eps)^{-1}S$ have operator norm strictly less than 1, so we get that
\[\Vert RTS\Vert=(\Vert R\Vert_{\mathrm{op}}+\eps)(\Vert S\Vert_{\mathrm{op}}+\eps)\Vert R'TS'\Vert\leq (\Vert R\Vert_{\mathrm{op}}+\eps)(\Vert S\Vert_{\mathrm{op}}+\eps)\Vert T\Vert.\]
Since this holds for all $\eps>0$, the result follows.

Now, for \eqref{p2}, by the polar decomposition, we have that
$T=U|T|$ and $|T|=U^*T$ for a parital isometry $U\in\mathcal M$. Thus, according to $(\spadesuit)$, we have that
\[\Vert T\Vert=\Vert U|T|\Vert\leq\Vert |T|\Vert=\Vert U^*T\Vert\leq \Vert T\Vert,\]
so $\Vert T\Vert=\Vert |T|\Vert$.
By taking adjoints on both sides of the equations, we also get that $\Vert T^*\Vert=\Vert |T|\Vert$.

Proceeding with \eqref{p3}, using the polar decomposition as above, we get
\[\Vert T^*T\Vert= \Vert |T|^2\Vert=\Vert |T||T|^*\Vert=\Vert U^*TT^*U\Vert\leq \Vert TT^*\Vert=\Vert U|T||T|U^*\Vert\leq\Vert T^*T\Vert.\]

Finally, we prove \eqref{p4}. Let $R\leq S$. By Lemma \ref{headache} we determine $T\in\mathcal M,\Vert T\Vert_{\mathrm{op}}\leq 1$ such that $R=S^{1/2}TS^{1/2}$.
Thus it follows from $\eqref{p3}$ and $\eqref{p1}$ that
\[\Vert R\Vert=\Vert S^{1/2}TS^{1/2}\Vert=\Vert T^{1/2}ST^{1/2}\Vert\leq \Vert S\Vert.\qedhere\]
\end{proof}

Some consequences, which we will use throughout the proofs, are the following.
\begin{cor}\label{smalcor}
Let $\mathcal M$ be a von Neumann algebra and let $\Vert\cdot\Vert \colon \mathcal M \to \mathbb R \cup \{\infty\}$ be a unitarily invariant semi-norm on $\mathcal M$.
Let $S,T,P\in \mathcal M$ with $\Vert S\Vert_{\mathrm{op}},\Vert T\Vert_{\mathrm{op}}\leq 1$ and $P\geq S^*S,T^*T$. Then
\begin{align}
\Vert P-S^*S\Vert,\Vert P-T^*T\Vert\leq 2\Vert P-S^*T\Vert,\tag{$\sharp$}\label{q1}
\end{align}
\end{cor}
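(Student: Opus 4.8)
The plan is to dominate the positive element $P-S^*S$ by a positive element whose seminorm is manifestly controlled by $\Vert P-S^*T\Vert$, and then to invoke the monotonicity property \eqref{p4}. As a preliminary observation I would record that the hypothesis $P\geq S^*S$ forces $P-S^*S$, and hence $P$ itself, to be self-adjoint; consequently property \eqref{p2} yields
\[\Vert P-T^*S\Vert=\Vert (P-S^*T)^*\Vert=\Vert P-S^*T\Vert.\]
This symmetry between the two ``off-diagonal'' terms will be the final ingredient.

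The heart of the argument is the operator inequality
\[0\leq P-S^*S\leq (P-S^*T)+(P-T^*S).\]
To establish the right-hand estimate I would chain two elementary positivity facts. First, since $P\geq T^*T$, we have $P-S^*S\leq (P-S^*S)+(P-T^*T)=2P-S^*S-T^*T$. Second, expanding the manifestly positive element $(S-T)^*(S-T)=S^*S-S^*T-T^*S+T^*T\geq 0$ and rearranging gives precisely $2P-S^*S-T^*T\leq (P-S^*T)+(P-T^*S)$. Concatenating these two inequalities produces the displayed bound.

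With the operator inequality in hand, the conclusion is immediate. Property \eqref{p4} gives $\Vert P-S^*S\Vert\leq \Vert (P-S^*T)+(P-T^*S)\Vert$, the triangle inequality splits the right-hand side as $\Vert P-S^*T\Vert+\Vert P-T^*S\Vert$, and the symmetry $\Vert P-T^*S\Vert=\Vert P-S^*T\Vert$ noted above collapses this to $2\Vert P-S^*T\Vert$. The bound on $\Vert P-T^*T\Vert$ then follows by interchanging the roles of $S$ and $T$ throughout (again invoking $\Vert P-T^*S\Vert=\Vert P-S^*T\Vert$ for the final rewriting).

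The only step demanding genuine care is assembling the operator inequality: the point to notice is that the two nonnegative ``slack'' terms $P-T^*T\geq 0$ and $(S-T)^*(S-T)\geq 0$ are exactly what bridge $P-S^*S$ to the symmetrized quantity $(P-S^*T)+(P-T^*S)$. Everything downstream is a routine application of \eqref{p2} and \eqref{p4}. It is worth remarking that the hypotheses $\Vert S\Vert_{\mathrm{op}},\Vert T\Vert_{\mathrm{op}}\leq 1$ are not actually used in this argument, which relies solely on the positivity assumptions $P\geq S^*S,T^*T$.
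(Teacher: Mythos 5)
Your proof is correct and is essentially the paper's own argument: the same operator inequality $0\leq P-S^*S\leq (P-S^*T)+(P-T^*S)$, obtained by adding the positive elements $P-T^*T$ and $(S-T)^*(S-T)$, followed by \eqref{p4}, the triangle inequality, and \eqref{p2} via $(P-T^*S)^*=P-S^*T$. Your closing observation is also accurate --- the paper's proof likewise never uses $\Vert S\Vert_{\mathrm{op}},\Vert T\Vert_{\mathrm{op}}\leq 1$, only the positivity hypotheses.
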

\begin{proof}
Since $P-S^*S,P-T^*T$ and $(S-T)^*(S-T)$ are positive, we get that
\begin{align*}
0\leq P-S^*S&\leq P-S^*S+P-T^*T+(S-T)^*(S-T)
\\&=P-S^*S+P-T^*T+S^*S+T^*T-S^*T-T^*S
\\&=P-S^*T+P-T^*S.
\end{align*}
Similarly, we have
\[0\leq P-T^*T\leq P-S^*T+P-T^*S,\]
so by \eqref{p4} and \eqref{p2}, using that $(P-T^*S)^*=P-S^*T$ (since $P$ is positive), the result follows.
\end{proof}

\begin{cor}\label{john}
Let $\mathcal M$ be a von Neumann algebra, let $\Vert\cdot\Vert$ be a unitarily invariant semi-norm on $\mathcal M$ and let $S,T\in \mathcal M$. Then
\begin{align}
\Vert S^*T\Vert\leq \frac{1}{2}(\Vert S^*S\Vert+\Vert T^*T\Vert)= \frac{1}{2}(\Vert SS^*\Vert+\Vert TT^*\Vert).\tag{$\flat$}\label{q2}
\end{align}
\end{cor}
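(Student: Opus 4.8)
The claimed equality on the right is immediate from \eqref{p3}, which gives $\Vert S^*S\Vert=\Vert SS^*\Vert$ and $\Vert T^*T\Vert=\Vert TT^*\Vert$. So the only content is the inequality $\Vert S^*T\Vert\leq\frac{1}{2}(\Vert S^*S\Vert+\Vert T^*T\Vert)$, which I would prove by an arithmetic--geometric--mean estimate. The naive attempt of expanding $(S-T)^*(S-T)\geq 0$ controls only the symmetric combination $S^*T+T^*S$ rather than $S^*T$ itself, so the plan is first to reduce to the case where $S^*T$ is positive, using the polar decomposition, and then to run the estimate.

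For the reduction, I would write the polar decomposition $S^*T=W\lvert S^*T\rvert$, where $W\in\mathcal M$ is a partial isometry with $W^*W$ equal to the support projection of $\lvert S^*T\rvert$. Setting $S':=SW$, one has $S'^*T=W^*S^*T=W^*W\lvert S^*T\rvert=\lvert S^*T\rvert$, which is positive. Writing $X:=S'^*T=\lvert S^*T\rvert$, property \eqref{p2} gives $\Vert X\Vert=\Vert\lvert S^*T\rvert\Vert=\Vert S^*T\Vert$, while \eqref{p1} together with $\Vert W\Vert_{\mathrm{op}}\leq 1$ gives $\Vert S'^*S'\Vert=\Vert W^*S^*SW\Vert\leq\Vert S^*S\Vert$.

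With $X\geq 0$ in hand the estimate is routine: since $X$ is self-adjoint, $T^*S'=X^*=X$, so expanding $0\leq(S'-T)^*(S'-T)=S'^*S'+T^*T-S'^*T-T^*S'$ yields $0\leq X\leq\frac{1}{2}(S'^*S'+T^*T)$. Applying monotonicity \eqref{p4}, the triangle inequality for the semi-norm, and the two estimates from the reduction step gives
\[\Vert S^*T\Vert=\Vert X\Vert\leq\tfrac{1}{2}\Vert S'^*S'+T^*T\Vert\leq\tfrac{1}{2}\big(\Vert S'^*S'\Vert+\Vert T^*T\Vert\big)\leq\tfrac{1}{2}\big(\Vert S^*S\Vert+\Vert T^*T\Vert\big).\]

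I expect the only delicate point to be the reduction: one must use that $W$ lives in $\mathcal M$ (which holds since $\mathcal M$ is a von Neumann algebra) and that $W^*W$ is exactly the support projection of $\lvert S^*T\rvert$, as this is what makes $S'^*T$ collapse to $\lvert S^*T\rvert$ rather than to a proper corner of it. A pleasant feature of this route is that, unlike arguments symmetrizing by a genuine unitary, it never requires extending $W$ to a unitary, so everything stays inside $\mathcal M$ and no passage to $\MM$ is needed.
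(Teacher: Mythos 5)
Your proof is correct and is essentially the paper's own argument: the paper likewise takes the partial isometry from the polar decomposition of $S^*T$ (your $W$, the paper's $U$), observes that $U^*S^*T=|S^*T|\geq 0$ and hence $U^*S^*T=T^*SU$, expands $(SU-T)^*(SU-T)\geq 0$ to get $0\leq U^*S^*T\leq\frac{1}{2}(U^*S^*SU+T^*T)$, and concludes via \eqref{p4}, the triangle inequality and \eqref{p1}, with the final equality from \eqref{p3}. Your only refinement is making explicit that $W^*W$ is the support projection of $|S^*T|$ (so that $W^*S^*T$ collapses to $|S^*T|$ exactly), a point the paper leaves implicit in the phrase ``using polar decomposition again.''
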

\begin{proof}
Using polar decomposition again, we can find an operator $U$ with $\Vert U\Vert_{\mathrm{op}}\leq 1$ so that $U^*S^*T\geq 0$ and $\Vert S^*T\Vert=\Vert U^*S^*T\Vert$. Note that this implies that $U^*S^*T=T^*SU$. Thus
\begin{align*}
0&\leq (SU-T)^*(SU-T)=U^*S^*SU+T^*T-U^*S^*T-T^*SU
\\&=U^*S^*SU+T^*T-2U^*S^*T.
\end{align*}
Thus $U^*S^*T\leq \frac{1}{2}(U^*S^*SU+T^*T)$, so the inequality follows from \eqref{p4} and \eqref{p1}. The last equality is \eqref{p3}.
\end{proof}

\begin{lemma}\label{partiso}
Let $\mathcal M$ be a von Neumann algebra, let $P,Q\in\mathcal M$ be projections and let $S\in P\mathcal MQ$ with $\Vert S\Vert_{\mathrm{op}}\leq 1$. Then there are partial isometries $V_1,V_2\in P\mathcal M Q$ such that
\[S=\frac{1}{2}(V_1+V_2).\]
\end{lemma}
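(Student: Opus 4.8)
The plan is to reduce to the polar decomposition of $S$ and then apply a $\cos$/$e^{\pm i\arccos}$ trick to its positive part. First I would record the elementary consequences of the hypotheses $S=PSQ$ and $\Vert S\Vert_{\mathrm{op}}\le 1$: the positive operator $S^*S$ satisfies $S^*S=QS^*SQ\le \Vert S\Vert_{\mathrm{op}}^2Q\le Q$, so by operator monotonicity of the square root $|S|:=(S^*S)^{1/2}\le Q$, and likewise $SS^*\le P$. Writing the polar decomposition $S=W|S|$, the partial isometry $W$ has initial projection $q:=W^*W$ equal to the support projection of $|S|$ (hence $q\le Q$) and final projection $p:=WW^*\le P$, and since $\ran W=\ran S\subseteq P\mathcal H$ one checks $W=PWQ$, i.e. $W\in P\mathcal M Q$.

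Next, working inside the corner $Q\mathcal M Q$ (with unit $Q$), I would set $A:=\arccos(|S|)$ through the continuous functional calculus applied to the positive contraction $|S|$, so that $A=A^*$, $\cos A=|S|$, and $e^{\pm iA}$ are unitaries of $Q\mathcal M Q$. The candidates are then
\[ V_1:=We^{iA},\qquad V_2:=We^{-iA}, \]
and the averaging identity is immediate: $\tfrac12(V_1+V_2)=W\cdot\tfrac12(e^{iA}+e^{-iA})=W\cos A=W|S|=S$.

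It remains to check that $V_1,V_2$ are partial isometries lying in $P\mathcal M Q$. Since $A$ is a Borel function of $|S|$, it commutes with the spectral projection $q=\chi_{(0,\infty)}(|S|)$, so $V_1^*V_1=e^{-iA}(W^*W)e^{iA}=e^{-iA}qe^{iA}=q$ and $V_1V_1^*=We^{iA}e^{-iA}W^*=WQW^*=WW^*=p$, using $W=Wq$; thus $V_1$ is a partial isometry with initial projection $q\le Q$ and final projection $p\le P$, and similarly for $V_2$. For membership in the corner, $PV_1=V_1$ because $PW=W$, while $V_1Q=V_1$ because $V_1=V_1q$ and $qQ=q$; hence $V_1=PV_1Q\in P\mathcal M Q$.

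The one point that genuinely needs care — and the step I would flag as the main obstacle — is the bookkeeping around the support projection $q$: one must ensure that passing from $|S|$ to $W|S|$ correctly cuts down the initial space, and in particular that $e^{\pm iA}$ commutes with $q$ so that $V_i^*V_i$ collapses to the projection $q$ rather than to all of $Q$. Everything else is formal functional calculus. If one wishes to sidestep the value of $\arccos$ at $0$, one can instead carry out the functional calculus inside $q\mathcal M q$, where $|S|$ has full support, at the cost of a marginally more careful identification of $e^{\pm iA}$ as an element of $\mathcal M$.
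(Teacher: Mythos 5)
Your proposal is correct and is essentially the paper's own proof in disguise: since $\arccos$ maps $[0,1]$ into $[0,\pi/2]$, your unitaries satisfy $e^{\pm iA}=\cos A\pm i\sin A=|S|\pm i\sqrt{Q-|S|^2}$ in the corner $Q\mathcal{M}Q$, which are exactly the operators $V_\pm$ the paper writes down directly, so your $V_1=We^{iA}$ and $V_2=We^{-iA}$ coincide with the paper's $UV_+$ and $UV_-$. Your additional check that $e^{\pm iA}$ commutes with the support projection $q=W^*W$, so that $V_i^*V_i$ collapses to $q$, is a verification the paper leaves implicit, and you handle it correctly.
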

\begin{proof}
We write $S=U|S|$ where $U\in P\mathcal MQ$ is a partial isometry. Now let $V_{\pm}:=|S|\pm i\sqrt{Q-|S|^2}$. We note that $V_\pm$ are unitaries in $Q\mathcal MQ$, and
\[S=\frac{1}{2}(UV_++UV_-).\]
Thus $V_1=UV_+$ and $V_2=UV_-$ are partial isometries with the desired properties.
\end{proof}

The only place where we use ultraweak lower semi-continuity is in the following lemma. In fact, this extra assumption on the semi-norm $\Vert\cdot\Vert$ is only necessary if the group $G$ is infinite.

\begin{lemma}
Let $G$ be an amenable group, let $\mathcal M$ be a von Neumann algebra, let $\Vert\cdot\Vert$ be a ultraweakly lower semi-continuous semi-norm on $\mathcal M$ and let $\ph\colon G\to\mathcal M$ such that $\sup_{x\in G}\Vert \ph(x)\Vert_{\mathrm{op}}<\infty$. Then 
\[
\Vert\mathbb E_x\ph(x)\Vert\leq\mathbb E_x\Vert \ph(x)\Vert.\tag{\ddag}\label{r1}
\]
\end{lemma}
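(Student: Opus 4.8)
The plan is to dualize the semi-norm. The only hypothesis not yet used is ultraweak lower semi-continuity, i.e. that the unit ball $B:=\{T\in\mathcal M\mid \Vert T\Vert\leq 1\}$ is closed in the $\sigma(\mathcal M,\mathcal M_*)$-topology; this is exactly what allows us to write $\Vert\cdot\Vert$ as a supremum of \emph{normal} functionals rather than arbitrary ones. First I would dispose of a triviality: if $x\mapsto\Vert\ph(x)\Vert$ is unbounded we read the right-hand side as $+\infty$ and there is nothing to prove, so we may assume $C:=\sup_{x\in G}\Vert\ph(x)\Vert<\infty$, whence $x\mapsto\Vert\ph(x)\Vert$ lies in $\ell^\infty(G)$ and $\mathbb E_x\Vert\ph(x)\Vert$ is defined.

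The key step is the dual description of the semi-norm. The set $B$ is convex and balanced (since $\Vert\cdot\Vert$ is a semi-norm), contains $0$, and is ultraweakly closed by assumption. Working in the dual pair $(\mathcal M,\mathcal M_*)$, whose associated weak$^*$-topology is precisely the ultraweak topology, I would take the polar
\[
B^\circ=\{f\in\mathcal M_*\mid |f(T)|\leq 1\ \text{for all }T\in B\}
\]
and invoke the bipolar theorem: since $B$ is already $\sigma(\mathcal M,\mathcal M_*)$-closed, convex and balanced, one has $B^{\circ\circ}=B$, i.e. $\Vert T\Vert\leq 1$ if and only if $|f(T)|\leq 1$ for all $f\in B^\circ$. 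Homogeneity of the gauge then upgrades this equivalence to
\[
\Vert T\Vert=\sup_{f\in B^\circ}|f(T)|,\qquad T\in\mathcal M,
\]
where the supremum is read as $+\infty$ off the ideal $\mathcal A$. I expect this to be the main obstacle of the argument, not because it is deep but because it is the one place where the hypothesis is indispensable: without lower semi-continuity the right-hand side would compute only the ultraweakly lower semi-continuous regularization of $\Vert\cdot\Vert$, and the desired inequality could fail.

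With the dual formula in hand the remainder is a short computation. Fix $f\in B^\circ$ and set $g(x):=f(\ph(x))$, a bounded scalar function on $G$. By the defining property of the mean, $f(\mathbb E_x\ph(x))=\mathbb E_x g(x)$. Since $\mathbb E$ is a positive, unital linear functional, it is monotone and satisfies $|\mathbb E_x g(x)|\leq\mathbb E_x|g(x)|$: writing $\mathbb E_x g(x)=e^{i\theta}|\mathbb E_x g(x)|$, the quantity $\mathbb E_x\, e^{-i\theta}g(x)$ is real and equals $\mathbb E_x\, \Ra(e^{-i\theta}g(x))\leq\mathbb E_x|g(x)|$ by positivity, using $\Ra(e^{-i\theta}g)\leq|g|$ pointwise. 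Because $f\in B^\circ$ we have $|g(x)|=|f(\ph(x))|\leq\Vert\ph(x)\Vert$ for every $x$, so monotonicity of $\mathbb E$ gives
\[
|f(\mathbb E_x\ph(x))|\leq \mathbb E_x|g(x)|\leq \mathbb E_x\Vert\ph(x)\Vert.
\]
Taking the supremum over $f\in B^\circ$ and applying the dual formula to $\mathbb E_x\ph(x)$ yields $\Vert\mathbb E_x\ph(x)\Vert\leq\mathbb E_x\Vert\ph(x)\Vert$, as desired.
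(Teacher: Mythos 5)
Your proposal is correct, but it takes a genuinely different route from the paper. The paper argues by approximation: for a positive, normalized $\mu\in\ell^1(G)$ it first shows $\Vert\mu(\ph)\Vert\leq\sum_{x}\mu(x)\Vert\ph(x)\Vert$, using the triangle inequality on finite partial sums and then ultraweak lower semi-continuity to pass to the full sum (which converges in operator norm, hence ultraweakly); it then takes a net $\mu_i$ of such functions converging weak* to $\mathbb E$, checks $f(\mu_i(\ph))\to\mathbb E_xf(\ph(x))$ for every $f\in\mathcal M_*$, and applies lower semi-continuity a second time in its liminf form, $\Vert\mathbb E_x\ph(x)\Vert\leq\liminf_i\Vert\mu_i(\ph)\Vert\leq\liminf_i\sum_x\mu_i(x)\Vert\ph(x)\Vert=\mathbb E_x\Vert\ph(x)\Vert$. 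You instead dualize once and for all: since the unit ball $B$ is convex, balanced, contains $0$ and is $\sigma(\mathcal M,\mathcal M_*)$-closed, the bipolar theorem in the dual pair $(\mathcal M,\mathcal M_*)$ gives $\Vert T\Vert=\sup_{f\in B^\circ}\vert f(T)\vert$ with $f$ ranging over \emph{normal} functionals (and the scaling argument you sketch correctly yields the value $+\infty$ off the ideal), after which the inequality follows by pushing $\mathbb E$ through each $f$ via the defining property $f(\mathbb E_x\ph(x))=\mathbb E_xf(\ph(x))$ together with $\vert\mathbb E_xg(x)\vert\leq\mathbb E_x\vert g(x)\vert$; your rotation argument for the latter is fine, the only implicit ingredient being that positivity makes $\mathbb E$ hermitian, so it commutes with taking real parts. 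What each approach buys: yours isolates conceptually where closedness of the ball is indispensable (without it the bipolar computes only the ultraweakly lower semi-continuous regularization, exactly as you observe) and avoids all convergence bookkeeping for the $\ell^1$ sums and the approximating net, while the paper's argument is more elementary in that it needs no polars, only the liminf form of lower semi-continuity and the weak* density of normalized positive $\ell^1$ functions among means. The two are of course cousins: your dual formula is the Hahn--Banach/bipolar repackaging of the same convexity that underlies the paper's liminf step, and your explicit convention that the right-hand side reads $+\infty$ when $x\mapsto\Vert\ph(x)\Vert$ is unbounded or leaves the ideal is a harmless point the paper leaves implicit.
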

\begin{proof}
For $\mu\in \ell^1(G)\del\ell^\infty(G)^*$ with $\mu(x)\geq 0,x\in G$, we can define $\mu(\ph):=\sum_{x\in G}\mu(x)\ph(x)$. We note that this sum converges in operator norm and hence also in the ultraweak topology. Furthermore, for finite $F\del G$, by the triangle inequality, we have that
\[\Vert \sum_{x\in F}\mu(x)\ph(x)\Vert\leq \sum_{x\in F}\mu(x)\Vert \ph(x)\Vert\leq \mu_x(\Vert \ph(x)\Vert).\]
By lower semi-continuity, we get that
$\Vert\mu(\ph)\Vert\leq \mu_x(\Vert \ph(x)\Vert).$

Now let $\mu_i\in\ell^1(G)$ be a net of positive functions with $\Vert \mu_i\Vert_1=1$ converging to $\mathbb E$ in the weak* topology on $\ell^\infty(G)^*$.
For all $f\in\mathcal M_*$ we have that
\[f(\mu_i(\ph))=f(\sum_{x\in G}\mu_i(x) \ph(x))=\sum_{x\in G}\mu_i(x)f(\ph(x))\to \mathbb E_xf(\ph(x)),\]
so
$\mu_i(\ph)$ converges to $\mathbb E(\ph)$ in the ultraweak topology, whence we conclude that
\[\Vert \mathbb E_x\ph(x)\Vert\leq \liminf_i \Vert\mu_i(\ph)\Vert\leq \liminf_i(\mu_i)_x(\Vert \ph(x)\Vert)=\mathbb E_x\Vert \ph(x)\Vert.\qedhere\]\end{proof}

This concludes the preliminary section and we turn our attention to the main theorem of this article.

\section{The main theorem}
We now state and prove our main theorem.
\begin{theorem}\label{main}
Let $\eps>0$, let $G$ be an amenable group, $\mathcal M$  a von Neumann algebra and let $\Vert\cdot\Vert$ be a unitarily invariant, ultraweakly lower semi-continuous semi-norm $\Vert\cdot \Vert$  on $\MM  $.
Let $\ph\colon G\to \mathcal M$ be any map and assume that $\Vert\ph(x)\Vert_{\mathrm{op}}\leq 1$ for all $x\in G$ and that 
\begin{gather*}\mathbb E_x\mathbb E_y\mathbb E_z \Vert 1_\mathcal M-\ph(x)\ph(y)^*\ph(yz)\ph(xz)^*\Vert<\eps,\\
\mathbb E_x\mathbb E_y\mathbb E_z \Vert 1_\mathcal M-\ph(xy)\ph(y)^*\ph(z)\ph(xz)^*\Vert<\eps.\end{gather*}

Then there exists a projection $P\in\MM  $, partial isometries $U,V\in P(\MM  )1_\mathcal M$ and a representation $\rho\colon G\to\mathcal U(P\MM P)$ such that
\[\mathbb E_x\Vert 1_\mathcal M-\ph(x)V^*\rho(x)^*U\Vert<44\eps,\]
and
\[
\Vert 1_\mathcal M-U^*U\Vert<20\eps,\qquad
\Vert P-UU^*\Vert< 15\eps,\qquad
\Vert P-VV^*\Vert<85\eps.
\]
\end{theorem}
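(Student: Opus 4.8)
The plan is to feed the positive-definite function produced by Proposition \ref{snakeispos} into Stinespring's dilation theorem and then to read off $U$, $V$ and $\rho$ directly from the dilation, deferring the passage to genuine partial isometries to the very end. Concretely, I would set $\tilde\ph(x)=\mathbb E_y\ph(xy)\ph(y)^*$, which is positive definite by Proposition \ref{snakeispos} and satisfies $\Vert\tilde\ph(x)\Vert_{\mathrm{op}}\le 1$. Applying Theorem \ref{sdt} to $\tilde\ph$ yields $W\in\MM 1_\mathcal M$ with $\Vert W\Vert_{\mathrm{op}}\le 1$ and a unitary representation $\pi\colon G\to\mathcal U(\MM)$ with $\tilde\ph(x)=W^*\pi(x)W$; in particular $W^*W=\tilde\ph(1)=\mathbb E_y\ph(y)\ph(y)^*$.

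Next I would extract a core estimate. Averaging the first hypothesis over $z$ alone, using the identity $\mathbb E_z\ph(yz)\ph(xz)^*=\tilde\ph(yx^{-1})$ and the inequality \eqref{r1}, gives
\[\mathbb E_x\mathbb E_y\Vert 1_\mathcal M-\ph(x)\ph(y)^*\tilde\ph(yx^{-1})\Vert<\eps.\]
Substituting $\tilde\ph(yx^{-1})=W^*\pi(y)\pi(x)^*W$ and performing the $y$-average inside the operator-valued mean turns this into $\mathbb E_x\Vert 1_\mathcal M-\ph(x)Z^*\pi(x)^*W\Vert<\eps$, where $Z:=\mathbb E_y\pi(y)^*W\ph(y)\in\MM 1_\mathcal M$ has $\Vert Z\Vert_{\mathrm{op}}\le 1$. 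Thus $(W,Z,\pi)$ already plays the role of $(U,V,\rho)$, apart from the requirements that the two legs be partial isometries and that the representation act on a corner $P\MM P$.

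From this core estimate I would then harvest the defect bounds. Pulling the mean inside via \eqref{r1} and applying \eqref{q1} to the factorization $1_\mathcal M\approx YW$ with $Y:=\mathbb E_x\ph(x)Z^*\pi(x)^*$ yields $\Vert 1_\mathcal M-W^*W\Vert=\Vert 1_\mathcal M-\tilde\ph(1)\Vert<2\eps$, while a per-$x$ application of \eqref{q1} gives $\mathbb E_x\Vert 1_\mathcal M-\ph(x)\ph(x)^*\Vert<2\eps$. These in turn control the domains and ranges of $W$ and $Z$, and, combined with the approximate multiplicativity of $\ph$ encoded in the hypotheses, deliver the approximate equivariances $\pi(x)W\approx W\tilde\ph(x)$ and $\pi(x)Z\approx Z\ph(x)$ on $\mathbb E$-average. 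Polar decomposition now replaces $W$ and $Z$ by partial isometries $U,V$, where Proposition \ref{headache} and \eqref{p4} bound $\Vert W-U\Vert$ and $\Vert Z-V\Vert$ by the defects above; since $1_\mathcal M-U^*U$ is the kernel projection of $\tilde\ph(1)$ and is dominated by $1_\mathcal M-\tilde\ph(1)$, \eqref{p4} gives $\Vert 1_\mathcal M-U^*U\Vert<20\eps$. Taking $P$ to be the smallest $\pi(G)$-invariant projection dominating $\ran U$ and $\ran V$ and setting $\rho=\pi|_{P\MM P}$, the inclusions $\ran U,\ran V\subseteq P$ give $V^*\pi(x)^*U=V^*\rho(x)^*U$, so the core estimate passes to $\rho$ and, after absorbing the partial-isometry corrections, produces $\mathbb E_x\Vert 1_\mathcal M-\ph(x)V^*\rho(x)^*U\Vert<44\eps$.

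I expect the main obstacle to be precisely the last step, namely the range defects $\Vert P-UU^*\Vert<15\eps$ and $\Vert P-VV^*\Vert<85\eps$. Unlike the finite-dimensional Gowers–Hatami situation, $P$ is an infinite invariant projection, so no dimension count is available; instead $P$ is the join of the infinitely many translates $\pi(g)\ran W$, and one must show in semi-norm that this join barely exceeds $\ran U$ and $\ran V$. This is exactly where the approximate equivariance established above and the ultraweak lower semicontinuity of $\Vert\cdot\Vert$ (through \eqref{r1}) are indispensable for taming the infinitely many generators, and the lossiness of these estimates — especially for the more delicate $V$-leg, whose range is assembled from all the $\pi(y)^*$-translates — is what forces the larger constant $85\eps$.
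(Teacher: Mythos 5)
Your first half reproduces the paper's opening moves exactly: forming $\tilde\ph$, applying Proposition \ref{snakeispos} and Theorem \ref{sdt}, and defining the correlator $Z=\mathbb E_y\pi(y)^*W\ph(y)$ so that $\mathbb E_x\Vert 1_\mathcal M-\ph(x)Z^*\pi(x)^*W\Vert<\eps$ are precisely the paper's $U$, $V$ and its first displayed estimate. The genuine gap is your construction of $P$. Taking $P$ to be the smallest $\pi(G)$-invariant projection dominating $\ran U$ and $\ran V$, i.e.\ the lattice join of the translates $\pi(g)(\ran U\vee\ran V)$, cannot yield $\Vert P-UU^*\Vert<15\eps$ and $\Vert P-VV^*\Vert<85\eps$, and the ingredients you invoke (approximate equivariance, ultraweak lower semicontinuity) do not repair it. Two reasons: (i) all your equivariance estimates are only $\mathbb E$-averaged, and for an infinite group an invariant mean gives no control whatsoever over any \emph{individual} translate $\pi(g)\ran W$, so infinitely many generators of the join may sit far from $\ran U$; (ii) even with uniform control the join is wildly discontinuous for every unitarily invariant semi-norm: infinitely many projections, each within $\eps$ of a fixed projection $Q$ but tilted in independent directions, typically have join equal to the central support of $Q$ (or all of $1_\infty$), so $\Vert P-UU^*\Vert$ can be enormous or infinite. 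You correctly identify this step as the main obstacle, but it is not a technical loss of constants --- it is where the argument breaks.

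The paper avoids joins entirely by an averaging trick, which is the one idea missing from your outline: set $A:=\mathbb E_x\pi(x)UU^*\pi(x)^*$, which by invariance of the mean commutes \emph{exactly} with $\pi(G)$. The second hypothesis (which your construction of $P$ never uses quantitatively) gives $\Vert U^*(1_\infty-A)U\Vert\le\mathbb E_x\Vert 1_\mathcal M-\tilde\ph(x)\tilde\ph(x)^*\Vert<\eps$, and hence $\Vert A-A^2\Vert<\eps$, so $A$ is an approximate projection in the semi-norm. One then takes $P:=\chi_{[1/2,1]}(A)$ by functional calculus; since $|\chi_{[1/2,1]}(t)-t|\le 2(t-t^2)$ on $[0,1]$, property \eqref{p4} yields $\Vert P-A\Vert<2\eps$, and the three defect estimates then follow from the core estimate via Corollary \ref{smalcor}, with the $V$-leg bound $\Vert P-V_0V_0^*\Vert<17\eps$ obtained from $\rho$-invariance of $P$ rather than from assembling $\ran V$ out of translates. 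A second, smaller discrepancy: the paper does not use the raw polar partial isometries but cuts with $\chi_{[1/2,1]}(|U_0|)$, resp.\ $\chi_{[1/2,1]}(|V_0^*|)$, because after compressing by $P$ only $\Vert\,|V_0^*|-|V_0^*|^2\Vert$ is controlled for the $V$-leg ($\Vert 1_\mathcal M-V_0^*V_0\Vert$ is \emph{not} small), so your kernel-projection argument works at best for the $U$-leg. If you replace your invariant join by the averaged operator $A$ and its spectral projection, the remainder of your outline goes through along the paper's lines.
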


Before commencing the proof, a few comments are in order. The two inequalities in the assumptions replace the assumption $\Vert \ph\Vert_{U^2}\geq cn$ in Theorem \ref{ghmain} and are more or less a direct adaptation of this latter inquality to our situation.
Note that for amenable groups, unlike for finite groups, the equality $\mathbb E_x\mathbb E_yf(x,y)=\mathbb E_y\mathbb E_xf(x,y)$ does not hold in general. 
For instance, in the case $G=\ZZ$ the function 
\[f(x,y)=\left\{\begin{matrix} 1,&\text{if } |x|\leq |y|,\\
0,&\text{otherwise}.\end{matrix}\right.\]
satisfies $\EE_x\EE_yf(x,y)=1$ and $\EE_y\EE_xf(x,y)=0$ for all invariant means $\EE$ on $\ell^\infty(\ZZ)$!
Therefore, the inequalities in the assumptions in the theorem are in general different. One reason for working with an abstract semi-norm instead of, say, the trace $p$-norm, is that the proof becomes conceptually simple; the long computations on the next couple of pages are nothing but repeated applications of the basic facts about ultraweakly lower semi-continuous, unitarily invariant semi-norms that we collected and proved in Section \ref{prol}. In order to underline this point, and hopefully to the convenience of the reader, we indicate the usage of the (in)equalities \eqref{li}, \eqref{ui}, \eqref{p1}, \eqref{p2}, \eqref{p3}, \eqref{p4}, \eqref{q1}, \eqref{q2} and \eqref{r1} to the right of the (in)equality, where it is used. Lest we forget the triangle inequality of $\Vert\cdot\Vert$, its usage will be indicated by ($\triangle$). The lines without any indications should be self-explanatory from the definitions or remarks during the proof.

\begin{proof}[Proof of Theorem \ref{main}]
Define $\tilde\ph\colon G\to\mathcal M$ by 
$$\tilde\ph(x):=\mathbb E_y\ph(xy)\ph(y)^*,x\in G,$$ which is positive definite by Proposition \ref{snakeispos}. Thus, by the Stinespring dilation theorem (Proposition \ref{sdt}), there exists a unitary representation $\pi\colon G\to\mathcal U(\MM  )$ together with $U\in \MM 1_\mathcal M$ with $\Vert U\Vert_{\mathrm{op}}\leq 1$ such that
\[\tilde\ph(g)=U^*\pi(g)U,\qquad g\in G.\]
Define $V:=\mathbb E_x \pi(x)^*U\ph(x)\in \MM 1_\mathcal M$ and $A:=\EE_x\pi(x)UU^*\pi(x)^*=\EE_x\pi(x)^*UU^*\pi(x)\in\pi(G)'$. We see that
\begin{align*}
\mathbb E_x\Vert 1_\mathcal M&-\ph(x)V^*\pi(x)^*U\Vert
\\&= \EE_x\Vert \EE_y( 1_\mathcal M-\ph(x)\ph(y)^*U^*\pi(yx^{-1})U)\Vert\\
&\leq \mathbb E_x\mathbb E_y\Vert 1_\mathcal M-\ph(x)\ph(y)^*U^*\pi(yx^{-1})U\Vert \tag{\ref{r1}}\\
&=\mathbb E_x\mathbb E_y\Vert 1_\mathcal M-\ph(x)\ph(y)^*\tilde\ph(yx^{-1})\Vert\\
&=\mathbb E_x\EE_y\Vert \EE_z (1_\mathcal M-\ph(x)\ph(y)^*\ph(yx^{-1}z)\ph(z)^*)\Vert
\\&\leq\mathbb E_x\mathbb E_y\mathbb E_z \Vert 1_\mathcal M-\ph(x)\ph(y)^*\ph(yx^{-1}z)\ph(z)^*\Vert\tag{\ref{r1}}
\\&=\mathbb E_x\mathbb E_y\mathbb E_z \Vert 1_\mathcal M-\ph(x)\ph(y)^*\ph(yz)\ph(xz)^*\Vert<\eps.\tag{\ref{li}}
\end{align*}
so $U$ and $V$ satisfy the desired inequality, but they need not be partial isometries and we have \textit{a priori} no control over their range projections.
In order to correct for that, we observe that $0\leq A\leq 1_{\infty}$ and
\begin{align*}
\Vert U^*(1_\infty-A)U\Vert
&\leq\mathbb E_x\Vert U^*U-U^*\pi(x)UU^*\pi(x)^*U\Vert\tag{\ref{r1}}
\\&= \EE_x\Vert U^*U-\tilde\ph(x)\tilde\ph(x)^*\Vert
\\&\leq \mathbb E_x\Vert 1_\mathcal M-\tilde\ph(x)\tilde\ph(x)^*\Vert\tag{\ref{p4}}
\\&\leq \mathbb E_x\mathbb E_y\mathbb E_z \Vert 1_\mathcal M-\ph(xy)\ph(y)^*\ph(z)\ph(xz)^*\Vert <\eps,\tag{\ref{r1}}
\end{align*}
so, since $\pi(x)$ and $(1-A)^{1/2}$ commute,
\begin{align*}
\Vert A-A^2\Vert&=\Vert (1_\infty-A)^{1/2}A(1_\infty-A)^{1/2}\Vert
\\&\leq \mathbb E_x\Vert(1_\infty-A)^{1/2}\pi(x)UU^*\pi(x)^*(1_\infty-A)^{1/2}\Vert\tag{\ref{r1}}
\\&=\mathbb E_x\Vert\pi(x)(1_\infty-A)^{1/2}UU^*(1_\infty-A)^{1/2}\pi(x)^*\Vert
\\&=\Vert(1_\infty-A)^{1/2}UU^*(1_\infty-A)^{1/2}\Vert\tag{\ref{ui}}
\\&=\Vert U^*(1_\infty-A)U\Vert<\eps.\tag{\ref{p3}}
\end{align*}
Now let $P:=\chi_{[1/2,1]}(A)$ which is a projection that commutes with $\pi$, so we can consider the representation $\rho:G\to \mathcal U(P\MM P)$ given by $\rho(g):=P\pi(g)P$.
Since $|\chi_{[1/2,1]}(t)-t|\leq 2(t-t^2)$ for all $t\in [0,1]$, we have that
\[\Vert P-A\Vert\leq 2\Vert A-A^2\Vert<2\eps,\tag{\ref{p4}}\]
and hence for all $x\in G$, we have
\begin{align*}
\Vert V^*&P^\perp\pi(x)^*P^{\perp}U\Vert 
\\&\leq \mathbb E_y\Vert \ph(y)^*U^*P^\perp\pi(yx^{-1})P^\perp U\Vert\tag{\ref{r1}}
\\&\leq \mathbb E_y\Vert U^*P^\perp\pi(yx^{-1})P^\perp U\Vert\tag{\ref{p1}}
\\&\leq\frac{1}{2}(\EE_y\Vert \pi(y)^*P^\perp UU^*P^\perp\pi(y)\Vert+\Vert\pi(x)^*P^\perp UU^*P^\perp\pi(x)\Vert)\tag{\ref{q2}}
\\&=\Vert P^\perp UU^*P^\perp\Vert\tag{\ref{ui}}
\\&=\Vert U^*P^\perp U\Vert\tag{\ref{p3}}
\\&\leq\Vert U^*(1_\infty-A)U\Vert+\Vert U^*(A-P)U\Vert<\eps+2\eps=3\eps.\tag{$\triangle$}
\end{align*}
Since $\pi(x)=\rho(x)+P^\perp\pi(x)P^\perp,x\in G$, we get that
\[\mathbb E_x\Vert 1_\mathcal M-\ph(x)V^*\rho(x)^*U\Vert < \mathbb E_x\Vert 1_\mathcal M-\ph(x)V^*\pi(x)^*U\Vert+3\eps<4\eps.\tag{$\triangle$,\ref{p1}}\]
Replace $U$ and $V$ by $U_0=PU,V_0=PV\in P\MM  1_\mathcal M$. We still need to turn $U_0$ and $V_0$ into partial isometries.
We write the polar decomposition of $U_0=S|U_0|$ and define $U_1:=S\chi_{[1/2,1]}(|U_0|)$. This is a partial isometry, and we calculate
\begin{align*}
\Vert U_1-U_0\Vert&\leq \Vert\chi_{[1/2,1]}(|U_0|)-|U_0|\Vert\tag{\ref{p1}}
\\&\leq 2\Vert|U_0|-|U_0|^2\Vert\tag{\ref{p4}}
\\&\leq 2\Vert 1_\mathcal M-U^*PU\Vert\tag{\ref{p4}}
\\&<2\Vert 1_\mathcal M-U^*AU\Vert+4\eps\tag{$\triangle$}
\\&\leq 2\mathbb E_x\Vert 1_\mathcal M-\tilde\ph(x)\tilde\ph(x)^*\Vert+4\eps<6\eps\tag{\ref{r1}}
,
\end{align*}
so 
\[\mathbb E_x\Vert 1_\mathcal M-\ph(x)V_0^*\rho(x)^*U_1\Vert< \mathbb E_x\Vert 1_\mathcal M-\ph(x)V_0^*\rho(x)U_0\Vert+6\eps<10\eps,\tag{$\triangle$,\ref{p1}}\]
and we conclude
\[\Vert 1_\mathcal M-U_1^*U_1\Vert\leq 2\mathbb E_x\Vert 1_\mathcal M-\ph(x)V_0^*\rho(x)^*U_1\Vert<20\eps.\tag{\ref{q1},\ref{r1}}\]
We proceed by estimating:
\begin{align*}
\Vert P-&U_0U_0^*\Vert=\Vert P-PUU^*P\Vert
\\&=\Vert(1_\infty-UU^*)^{1/2}P(1_\infty-UU^*)^{1/2}\Vert\tag{\ref{p3}}
\\&<\Vert (1_\infty-UU^*)^{1/2}A(1_\infty-UU^*)^{1/2}\Vert+2\eps\tag{$\triangle$,\ref{p1}}
\\&\leq\mathbb E_x\Vert(1_\infty-UU^*)^{1/2}\pi(x)^*UU^*\pi(x)(1_\infty-UU^*)^{1/2}\Vert+2\eps\tag{\ref{r1}}
\\&=\mathbb E_x\Vert U^*\pi(x)(1_\infty-UU^*)\pi(x)^*U\Vert +2\eps\tag{\ref{p3}}
\\&=\mathbb E_x\Vert U^*U-U^*\pi(x)UU^*\pi(x)^*U\Vert+2\eps
\\&\leq\mathbb E_x\Vert 1_\mathcal M-\tilde\ph(x)\tilde\ph(x)^*\Vert+2\eps<3\eps,\tag{\ref{p4}}
\end{align*}
so 
\begin{align*}
\Vert P-U_1U_1^*\Vert&\leq \Vert P-U_0U_0^*\Vert+\Vert (U_0-U_1)U_0^*\Vert+\Vert U_1(U_0-U_1)^*\Vert\tag{$\triangle$}
\\&<\Vert P-U_0U_0^*\Vert+6\eps+6\eps<15\eps.\tag{\ref{p1},\ref{p2}}
\end{align*}

Similarly, we replace $V_0=|V_0^*|T$ with $V_1:=\chi_{[1/2,1]}(|V_0^*|)T$ and get a partial isometry. In order to get the remaining estimates, we first note that
\begin{align*}
\Vert P-V_0V_0^*\Vert &=\mathbb E_x \Vert P-\rho(x)V_0V_0^*\rho(x)^*\Vert\tag{\ref{ui}}
\\&< \mathbb E_x\Vert U_0U_0^*-U_0U_0^*\rho(x)V_0V_0^*\rho(x)^*U_0U_0^*\Vert +9\eps\tag{$\triangle$}
\\&\leq \mathbb E_x\Vert 1_\mathcal M-U_0^*\rho(x)V_0V_0^*\rho(x)^*U_0\Vert+9\eps\tag{\ref{p1}}
\\&\leq 2\mathbb E_x\Vert 1_\mathcal M-\ph(x)V_0^*\rho(x)^*U_0\Vert+9\eps\tag{\ref{q1}}
\\&<17\eps,
\end{align*}
which entails that
\[\Vert V_1-V_0\Vert\leq 2\Vert |V_0^*|-|V_0^*|^2\Vert\leq 2\Vert P-V_0V_0^*\Vert<34\eps,\tag{\ref{p1},\ref{p4}}\]
so
\[\Vert P-V_1V_1^*\Vert\leq \Vert P-V_0V_0^*\Vert+68\eps<85\eps.\tag{$\triangle$,\ref{p1}}\]
Finally, we conclude that
\begin{align*}
\mathbb E_x\Vert 1_\mathcal M-&\ph(x)V_1^*\rho(x)^*U_1\Vert
\\&<\mathbb E_x\Vert 1_\mathcal M-\ph(x)V_0^*\rho(x)^*U_0\Vert+6\eps+34\eps\tag{$\triangle$,\ref{p1}}
<44\eps.
\end{align*}
Now the proof is complete by renaming $U_1$ and $V_1$ to $U$ and $V$.
\end{proof}

\begin{rem}
We remark that it follows from the estimates that
\begin{align*}
\mathbb E_x\Vert \rho(x)-U\ph(x)V^*\Vert&=\mathbb E_x\Vert P-U\ph(x)V^*\rho(x)^*\Vert\tag{\ref{ui}}\\
&\le \mathbb E_x\Vert UU^*-U\ph(x)V^*\rho(x)^*UU^*\Vert+30\eps\tag{$\triangle$}\\
&\leq \mathbb E_x\Vert 1_\mathcal M-\ph(x)V^*\rho(x)^*U\Vert+30\eps
<74\eps,\tag{\ref{p1}}
\end{align*}
that is, after inflating $\ph$ a bit, it is approximated on average by $\rho$.
\end{rem}
\begin{rem}\label{nemid}
We note for later use that if one does not require $V$ to be a partial isometry, then we can end the above proof earlier and get the better esitmate
\[\mathbb E_x\Vert 1_\mathcal M-\ph(x)V^*\rho(x)^*U\Vert<10\eps.\]
\end{rem}

\section{The inverse theorem}
In this section we explain how Theorem \ref{ghmain} follows from our Theorem \ref{main}. More precisely, in the case where $c=1-\eps$ is sufficiently close to $1$, the result is a direct consequence of Theorem \ref{main} in the case where $\Vert x\Vert=\Tr(x^*x)^{1/2}$ (we will, though, achieve a coarser lower bound on the trace), but for smaller $c$, although the techniques will be similar, we will need to accomodate our proof a bit.
Note that given a map $\ph\colon G\to\mathcal M$, expressions of the form
$$\mathbb E_{x}\mathbb E_{y}\mathbb E_z\tau(\ph(x)\ph(y)^*\ph(yz)\ph(xz)^*)$$ or
$$\mathbb E_{x}\mathbb E_{y}\mathbb E_z\tau(\ph(xy)\ph(y)^*\ph(z)\ph(xz)^*)$$ define non-negative real numbers. For the first, note that we have
$$\tau(\ph(x)\ph(y)^*\ph(yz)\ph(xz)^*) = \tau(\ph(y)^*\ph(yz)\ph(xz)^*\ph(x))$$
by the trace property. Now, it is easy to see from the proof of Proposition \ref{snakeispos} that $K \colon G \times G \to \mathcal M$ given by $$K(x,y) :=\mathbb E_z( \ph(y)^*\ph(yz)\ph(xz)^*\ph(x))$$ is a positive definite operator-valued kernel. Thus, by complete positivity of $\tau \colon M \to \mathbb C$, the composition $\tau \circ K \colon G \times G \to \mathbb C$ is a positive definite kernel as well. Using the representation theorem for positive definite kernels, there exists a Hilbert space valued function $\alpha \colon G \to \mathcal H$ such that for all $x,y \in G$, we have 
$(\tau \circ K)(x,y) = \langle \alpha(x),\alpha(y) \rangle$ and we can conclude that
\begin{align*}
&\mathbb E_{x}\mathbb E_{y}\mathbb E_z\tau(\ph(x)\ph(y)^*\ph(yz)\ph(xz)^*) \\
& =\mathbb E_{x}\mathbb E_{y}\mathbb E_z\tau(\ph(y)^*\ph(yz)\ph(xz)^*\ph(x))) \\
&= \mathbb E_{x}\mathbb E_{y} ((\tau \circ K)(x,y)) = \mathbb E_{x}\mathbb E_{y} \langle \alpha(x),\alpha(y) \rangle = \langle \mathbb E_{x}\alpha(x),\mathbb E_{y}\alpha(y) \rangle \geq 0.
\end{align*}
For the second expression, we compute more easily
\begin{align*}
&\mathbb E_{x}\mathbb E_{y}\mathbb E_z\tau(\ph(xy)\ph(y)^*\ph(z)\ph(xz)^*) \\
& =\mathbb E_{x}\tau(\mathbb E_{y}(\ph(xy)\ph(y)^*)\mathbb E_z(\ph(z)\ph(xz)^*)) \geq 0.
\end{align*}

We will now study what can be said about $\varphi$ in the presence of lower bounds on those quantities. Theorem \ref{invtrace1} will cover what is called the 99\% regime and Theorem \ref{invtrace2} covers what is called the 1\% regime.

\begin{theorem} \label{invtrace1}
Let $\eps>0$, let $G$ be an amenable group, let $\mathcal M$ be a von Neumann algebra and fix a normal trace $\tau$ on $\MM  $ with $\tau(1_\mathcal M)=1$. Let $\ph\colon G\to\mathcal M$ be a map with $\Vert\ph(x)\Vert_{\mathrm{op}}\leq 1$ for all $x\in G$. Assume that
\begin{gather*}
\mathbb E_{x}\mathbb E_{y}\mathbb E_z\tau(\ph(x)\ph(y)^*\ph(yz)\ph(xz)^*)\geq 1-\eps, \\
\mathbb E_{x}\mathbb E_{y}\mathbb E_z\tau(\ph(xy)\ph(y)^*\ph(z)\ph(xz)^*)\geq 1-\eps.\end{gather*}
Then there exist a projection $P\in\MM  $, partial isometries $U,V\in P\MM  1_\mathcal M$ and a representation $\rho\colon G\to \mathcal U(P\MM P)$ such that 
\begin{gather*}
\mathbb E_x\tau(\ph(x)V^*\rho(x)^*U)\geq 1-63\eps^{1/2},\\
\Vert 1_\mathcal M-U^*U\Vert_2<29\eps^{1/2},\quad \Vert P-UU^*\Vert_2<22\eps^{1/2},\quad \Vert P-VV^*\Vert_2<121\eps^{1/2}.
\end{gather*}
\end{theorem}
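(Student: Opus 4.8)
The plan is to deduce Theorem \ref{invtrace1} directly from the abstract Theorem \ref{main}, applied with the unitarily invariant, ultraweakly lower semi-continuous semi-norm $\Vert T\Vert:=\Vert T\Vert_2=\tau(T^*T)^{1/2}$ on $\MM$ (admissible by the discussion following Definition \ref{normi}). The only real work is to translate the trace hypotheses into the norm hypotheses of Theorem \ref{main}, and then to translate the norm conclusion back into the trace conclusion claimed here. Both passages cost a square root, and this is precisely what turns the first-order bound $\eps$ of Theorem \ref{main} into the $\eps^{1/2}$ appearing above.

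For the forward translation I would first record the elementary estimate, valid for any $T\in\mathcal M$ with $\Vert T\Vert_{\mathrm{op}}\leq 1$,
\[\Vert 1_{\mathcal M}-T\Vert_2^2=\tau\big((1_{\mathcal M}-T)^*(1_{\mathcal M}-T)\big)=1-2\,\Ra\tau(T)+\tau(T^*T)\leq 2-2\,\Ra\tau(T),\]
using $\tau(1_{\mathcal M})=1$ and $T^*T\leq 1_{\mathcal M}$. Applying this with $T=\ph(x)\ph(y)^*\ph(yz)\ph(xz)^*$ (of operator norm at most $1$), averaging over $x,y,z$, and using that the averaged trace is a non-negative real number bounded below by $1-\eps$, I obtain $\mathbb E_x\mathbb E_y\mathbb E_z\Vert 1_{\mathcal M}-T\Vert_2^2\leq 2\eps$; the same computation applies verbatim to $T'=\ph(xy)\ph(y)^*\ph(z)\ph(xz)^*$. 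Since a mean is a state on $\ell^\infty$, the Cauchy--Schwarz inequality $\mathbb E(f)\leq\mathbb E(f^2)^{1/2}$ yields $\mathbb E_x\mathbb E_y\mathbb E_z\Vert 1_{\mathcal M}-T\Vert_2\leq(2\eps)^{1/2}$, and likewise for $T'$. Thus the two hypotheses of Theorem \ref{main} hold with $\eps$ replaced by any number exceeding $\eps_0:=(2\eps)^{1/2}=\sqrt2\,\eps^{1/2}$, so Theorem \ref{main} produces a projection $P$, partial isometries $U,V\in P\MM 1_{\mathcal M}$ and a representation $\rho$ with $\mathbb E_x\Vert 1_{\mathcal M}-\ph(x)V^*\rho(x)^*U\Vert_2<44\eps_0$ together with the range estimates $20\eps_0$, $15\eps_0$, $85\eps_0$.

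For the backward translation I would use the dual Cauchy--Schwarz bound $|\tau(A)|=|\langle A,1_{\mathcal M}\rangle_\tau|\leq\Vert A\Vert_2\Vert 1_{\mathcal M}\Vert_2=\Vert A\Vert_2$, valid for $A\in\mathcal M=1_{\mathcal M}\MM 1_{\mathcal M}$ since $\tau(1_{\mathcal M})=1$. Writing $S=\ph(x)V^*\rho(x)^*U\in\mathcal M$, this gives $1-\Ra\tau(S)=\Ra\tau(1_{\mathcal M}-S)\leq\Vert 1_{\mathcal M}-S\Vert_2$, whence $\mathbb E_x\Ra\tau(S)\geq 1-44\eps_0$. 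Substituting $\eps_0=\sqrt2\,\eps^{1/2}$ and checking $44\sqrt2<63$, $20\sqrt2<29$, $15\sqrt2<22$ and $85\sqrt2<121$ gives exactly the four claimed estimates. The point to be careful about is purely bookkeeping: Theorem \ref{main} demands \emph{strict} inequalities in its hypotheses, while I only have $\mathbb E\Vert 1_{\mathcal M}-T\Vert_2\leq(2\eps)^{1/2}$, so I would apply it with $\eps_0$ replaced by $(2\eps)^{1/2}+\delta$ and let $\delta\downarrow 0$, the strict conclusions surviving thanks to the slack in the constants just verified. There is no deeper obstacle: the two Cauchy--Schwarz steps (the source of the square-root loss, and of the $\eps^{1/2}$) are where the genuine estimation happens, and the real content is already carried by Theorem \ref{main}.
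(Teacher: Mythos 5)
Your proposal is correct and follows essentially the same route as the paper's own proof: specialize Theorem \ref{main} to the semi-norm $\Vert\cdot\Vert_2=\tau((\cdot)^*(\cdot))^{1/2}$, pass from the trace hypotheses to the norm hypotheses via $\Vert 1_\mathcal M-T\Vert_2^2\leq 2-2\Ra\tau(T)$ and Cauchy--Schwarz for the mean (yielding $(2\eps)^{1/2}$), and pass back via $|\tau(A)|\leq\Vert A\Vert_2\Vert 1_\mathcal M\Vert_2$, with the same constant checks $44\sqrt2<63$, $20\sqrt2<29$, $15\sqrt2<22$, $85\sqrt2<121$. The only cosmetic differences are that the paper multiplies $U$ by a unimodular scalar so that $\mathbb E_x\tau(\ph(x)V^*\rho(x)^*U)$ is literally a non-negative real (you bound its real part, which is equivalent after that rotation), while your explicit $\delta$-slack treatment of the strict inequalities and your use of the correct constant $85\eps_0$ for $\Vert P-VV^*\Vert_2$ (the paper's proof text writes $95(2\eps)^{1/2}$, an apparent typo, since only $85\sqrt2<121$ delivers the stated bound) are, if anything, tidier than the original.
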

\begin{proof}
We consider the semi-norm $\Vert T\Vert_2:=\tau(T^*T)^{1/2},T\in\MM$. This is a ultraweakly lower semi-continuous unitarily invariant semi-norm on $\MM  $. 
It follows that
\begin{align*}
&\mathbb E_{x}\mathbb E_{y}\mathbb E_z\Vert 1_\mathcal M-\ph(x)\ph(y)^*\ph(yz)\ph(xz)^*\Vert_2
\\&\leq (\mathbb E_{x}\mathbb E_{y}\mathbb E_z(\Vert 1_\mathcal M-\ph(x)\ph(y)^*\ph(yz)\ph(xz)^*\Vert_2^2))^{1/2}
\\&\leq (\mathbb E_{x}\mathbb E_{y}\mathbb E_z(2-2\Ra\tau(\ph(x)\ph(y)^*\ph(yz)\ph(xz)^*))^{1/2}\leq (2\eps)^{1/2}.
\end{align*}
Similarly
\begin{align*}
\mathbb E_{x}\mathbb E_{y}\mathbb E_z\Vert 1_\mathcal M-\ph(xy)\ph(y)^*\ph(z)\ph(xz)^*\Vert_2\leq (2\eps)^{1/2}.
\end{align*}
Hence, from Theorem \ref{main} there exist $P\in\MM  $ and partial isometries $U,V\in P\MM  1_\mathcal M$ together with $\rho\colon G\to\mathcal U(P\MM P)$ such that
\[\Vert 1_\mathcal M-\EE_x\ph(x)V^*\rho(x)^*U\Vert_2\leq\EE_x\Vert 1_\mathcal M-\ph(x)V^*\rho(x)^*U\Vert_2<44(2\eps)^{1/2},\tag{\ref{r1}}\]
and furthermore $\Vert 1_\mathcal M-U^*U\Vert_2<20(2\eps)^{1/2}$, $\Vert P-UU^*\Vert_2<15(2\eps)^{1/2}$ and $\Vert P-VV^*\Vert_2<95(2\eps)^{1/2}$.
It now follows from the Cauchy-Schwarz ineqality that
\begin{align*}
\Big|1-|\tau(\EE_x\ph(x)V^*\rho(x)^*U)|\Big|&\leq |1-\tau(\EE_x\ph(x)V^*\rho(x)^*U)\big|
\\&=|\tau(1_\mathcal M\cdot(1_\mathcal M-\EE_x\ph(x)V^*\rho(x)^*U))|
\\&\leq \Vert 1_\mathcal M\Vert_2\Vert 1_\mathcal M-\EE_x\ph(x)V^*\rho(x)^*U\Vert_2
\\&<44(2\eps)^{1/2}.
\end{align*}
By multiplying $U$ with a complex number of modulus $1$, we can assume that $\tau(\EE_x\ph(x)V^*\rho(x)^*U)\geq 0$ so we get the desired
\[\tau(\EE_x\ph(x)V^*\rho(x)^*U)> 1-44(2\eps)^{1/2}>1-63\eps^{1/2}.\qedhere\]
\end{proof}

Now we turn our attention to the inverse theorem for general $c\in [0,1]$. This theorem is specific for the trace and we cannot use Theorem \ref{main}, but the proof is similar and even a bit shorter. Note that in this case we need less assumptions; we only assume one inequality.
\begin{theorem} \label{invtrace2}
Let $\mathcal M$ be a von Neumann algebra and let $\tau$ be a normal trace on $\MM  $ such that $\tau(1_\mathcal M)=1$.
Let $\ph\colon G\to\mathcal M$ be a map with $\Vert\ph(x)\Vert_{\mathrm{op}}\leq 1$ for all $x\in G$. Assume that
\[\mathbb E_{x}\mathbb E_{y}\mathbb E_z\tau(\ph(x)\ph(y)^*\ph(yz)\ph(xz)^*)\geq c.\]
Then there exists a projection $P\in\MM  $, partial isometries $U,V\in P(\MM  )1_\mathcal M$ and a representation $\rho\colon G\to \mathcal U(P\MM P)$ such that 
\begin{gather*}
\frac{c}{2}\leq \tau(UU^*)\leq \tau(P)\leq \frac{2}{c},\qquad
\frac{c}{2}\leq \tau(VV^*)\leq \tau(P)\leq \frac{2}{c},\\
\mathbb E_x\tau(\ph(x)V^*\rho(x)^*U)\geq \frac{c}{2}.
\end{gather*}
\end{theorem}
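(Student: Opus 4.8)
The plan is to re-run the Stinespring machinery from the proof of Theorem \ref{main} and then replace the operator-norm estimates, which are unavailable in the $1\%$ regime, by trace estimates adapted to the averaging operator $A$. First I would set $\tilde\ph(x):=\EE_y\ph(xy)\ph(y)^*$, positive definite by Proposition \ref{snakeispos}, and apply Stinespring (Theorem \ref{sdt}) to get a representation $\pi\colon G\to\mathcal U(\MM)$ and $U\in\MM 1_\mathcal M$ with $\Vert U\Vert_{\mathrm{op}}\le1$ and $\tilde\ph(g)=U^*\pi(g)U$. As in the main proof I put $V:=\EE_x\pi(x)^*U\ph(x)\in\MM 1_\mathcal M$ and $A:=\EE_x\pi(x)^*UU^*\pi(x)\in\pi(G)'$, with $0\le A\le1_\infty$. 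The point of departure is that the single hypothesis is exactly $\tau(V^*V)$: unwinding the definitions and relabelling the mean via \eqref{li} gives $\tau(V^*V)=\EE_x\EE_y\EE_z\tau(\ph(x)\ph(y)^*\ph(yz)\ph(xz)^*)\ge c$, and of course $\tau(V^*V)=\tau(VV^*)$. Three further elementary facts will carry everything: a variance computation, expanding $\EE_x(\pi(x)^*U\ph(x)-V)(\pi(x)^*U\ph(x)-V)^*\ge0$, yields $VV^*\le\EE_x\pi(x)^*U\ph(x)\ph(x)^*U^*\pi(x)\le A$; since $V\in\MM 1_\mathcal M$ the support projection of $V^*V$ lies under $1_\mathcal M$, so $\tau(\supp(VV^*))=\tau(\supp(V^*V))\le\tau(1_\mathcal M)=1$; and $\tau(A)=\tau(UU^*)=\tau(U^*U)=\EE_y\tau(\ph(y)\ph(y)^*)\le1$.

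Next I would take the spectral projection $P:=\chi_{[c/2,1]}(A)\in\pi(G)'$ and set $\rho(g):=\pi(g)P=P\pi(g)P$, a genuine representation of $G$ on $P\MM P$. From $A\ge\tfrac c2P$ one reads off $\tau(P)\le\tfrac2c\tau(A)\le\tfrac2c$. The crux is the leakage estimate $\tau(P^\perp VV^*)\le c/2$: on the range of $P^\perp$ we have $A<\tfrac c2$, so $P^\perp VV^*P^\perp\le P^\perp AP^\perp\le\tfrac c2P^\perp$, hence $\Vert P^\perp VV^*P^\perp\Vert_{\mathrm{op}}\le\tfrac c2$; meanwhile $\supp(P^\perp VV^*P^\perp)$ is Murray--von Neumann equivalent to $\supp(V^*P^\perp V)\le\supp(V^*V)$, hence of trace $\le1$, so $\tau(P^\perp VV^*)=\tau(P^\perp VV^*P^\perp)\le\tfrac c2\cdot1$. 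Combined with $\tau(VV^*)\ge c$ this gives $\tau(PVV^*)\ge c/2$, and since $VV^*\le A$ and $\tau(PUU^*)=\tau(PA)$ I also get $\tau(PUU^*)=\tau(PA)\ge\tau(PVV^*P)=\tau(PVV^*)\ge c/2$.

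I would then cut down to $U_0:=PU$ and $V_0:=PV$ in $P\MM 1_\mathcal M$. Using that $P$ commutes with $\pi$, a short computation gives $\EE_x\tau(\ph(x)V_0^*\rho(x)^*U_0)=\tau(PVV^*)\ge c/2$, while $\tau(U_0U_0^*)=\tau(PUU^*)\ge c/2$ and $\tau(V_0V_0^*)=\tau(PVV^*)\ge c/2$. It remains to turn $U_0,V_0$ into partial isometries; unlike the $99\%$ regime there is no smallness making $\vert U_0\vert$ close to a projection, so instead I would invoke Lemma \ref{partiso} to write $U_0=\tfrac12(U^{(1)}+U^{(2)})$ and $V_0=\tfrac12(V^{(1)}+V^{(2)})$ with $U^{(i)},V^{(j)}$ partial isometries in $P\MM 1_\mathcal M$ whose range projections are exactly $\supp(U_0U_0^*)$ resp.\ $\supp(V_0V_0^*)$, hence of trace $\ge c/2$ and $\le\tau(P)$. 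Since $(U',V')\mapsto\EE_x\tau(\ph(x)V'^*\rho(x)^*U')$ is bilinear, expanding yields $\tau(PVV^*)=\tfrac14\sum_{i,j}\EE_x\tau(\ph(x)(V^{(j)})^*\rho(x)^*U^{(i)})$, so by averaging some term has real part $\ge c/2$; rotating that $U^{(i)}$ by a unit scalar makes the value a positive real $\ge c/2$. Renaming the selected pair $U,V$ finishes the proof.

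I expect the main obstacle to be precisely the leakage estimate. A priori $A$ could carry most of its trace on eigenvalues just below the threshold, which would make $\tau(P^\perp A)$ (and naively $\tau(P^\perp VV^*)$) as large as $\tau(VV^*)$ and destroy the bound; thresholding $A$ at $c/2$ could even annihilate $P$ entirely. The resolution---that $V^*V$, being supported under $1_\mathcal M$, forces $\tau(\supp(VV^*))\le1$, so the low-eigenvalue block of $VV^*$ contributes at most $(\text{threshold})\times1=c/2$ to the trace---is the one genuinely new ingredient beyond the estimates already used for Theorem \ref{main}; the passage to honest partial isometries is then routine via Lemma \ref{partiso} and pigeonholing, rather than the polar-decomposition truncation of the $99\%$ regime.
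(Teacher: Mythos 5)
Your proposal is correct, and it follows the paper's overall architecture --- Stinespring dilation of $\tilde\ph$, the averaged intertwiner $V=\EE_x\pi(x)^*U\ph(x)$, a spectral cut-off of an averaged positive operator commuting with $\pi$, the cut-downs $U_0=PU$, $V_0=PV$, and finally Lemma \ref{partiso} plus pigeonholing --- but you implement the two decisive estimates by genuinely different means. The paper averages $VV^*$, setting $A=\EE_x\pi(x)VV^*\pi(x)^*$ and $P=\chi_{[c/2,1]}(A^{1/2})$, and derives both $\tau(P)\le\tfrac2c$ and the leakage bound $\tau(AP^\perp)\le\tfrac c2$ from the single inequality $\tau(A^{1/2})\le1$ (attributed to Kadison's inequality) together with the pointwise bounds $\chi_{[c/2,1]}(t)\le\tfrac2ct$ and $t^2(1-\chi_{[c/2,1]}(t))\le\tfrac c2t$, transferring the leakage to $V$ via $\tau(VV^*P^\perp)=\tau(AP^\perp)$. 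You instead keep $A=\EE_x\pi(x)^*UU^*\pi(x)$ as in Theorem \ref{main}, threshold $A$ itself, dominate $VV^*\le A$ by the variance expansion, get $\tau(P)\le\tfrac2c\tau(A)\le\tfrac2c$ from $\tau(A)\le\tau(UU^*)=\tau(\tilde\ph(e))\le1$, and prove the leakage bound directly from the corner structure: $\tau(P^\perp VV^*)=\tau(V^*P^\perp V)\le\Vert V^*P^\perp V\Vert_{\mathrm{op}}\,\tau(1_\mathcal M)\le\tfrac c2$, since $V\in\MM 1_\mathcal M$. This is the step you rightly single out as the crux, and your version buys something real: it needs only $\tau(A)\le1$ rather than $\tau(A^{1/2})\le1$, and for a semifinite trace the passage to $A^{1/2}$ is delicate (small spectral values of $A$ spread over a projection of large trace can inflate $\tau(A^{1/2})$ well beyond $\tau(A)$), whereas your bound uses nothing beyond $\tau(1_\mathcal M)=1$ and Murray--von Neumann equivalence of left and right supports. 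Your endgame is also cleaner: the two partial isometries produced by Lemma \ref{partiso} both have range projection exactly $\supp(U_0U_0^*)$ (in the lemma's notation $V_\pm$ are unitaries in $Q\mathcal MQ$, so $UV_\pm(UV_\pm)^*=UU^*$), whence $\tau\bigl(U^{(i)}(U^{(i)})^*\bigr)\ge\tau(U_0U_0^*)=\tau(PUU^*)\ge\tau(PVV^*)\ge\tfrac c2$ holds for all four candidates \emph{before} pigeonholing; the paper instead proves the trace lower bound after selection, via $B=\EE_x\ph(x)V_1^*\rho(x)^*$ and the estimate $|\tau(BU_1)|\le\Vert B\Vert_{\mathrm{op}}\,\tau(|U_1|)=\tau(U_1U_1^*)$.

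Two small points of hygiene, neither affecting correctness. First, state $\tau(A)\le\tau(UU^*)$ rather than equality: a weak*-mean need not commute with an infinite normal trace, but normality gives ultraweak lower semi-continuity of $\tau$ on the positive cone, which yields exactly the inequality, and that is all you use. Second, since iterated means do not commute in general, your identity $\tau(V^*V)=\EE_x\EE_y\EE_z\tau(\ph(x)\ph(y)^*\ph(yz)\ph(xz)^*)$ deserves the one-line check that the means appear in the order of the hypothesis: write $\tau(V^*V)=\EE_x\tau(\ph(x)V^*\pi(x)^*U)$ by cyclicity, expand $V^*$ and then $\tilde\ph$ to get outer mean in $x$, then $y$, then $z$, and substitute $z\mapsto xz$ in the innermost mean --- exactly the computation carried out in the paper's proof.
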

\begin{proof}
The proof begins as the proof of the main theorem. Define the positive definite $\tilde\ph(x):=\mathbb E_{y}\ph(xy)\ph(y)^*$ and use Proposition \ref{sdt} to determine $U\in \MM 1_\mathcal M$ and a representation $\pi\colon G\to \MM $ such that $\tilde\ph(x)=U^*\pi(x)U$.
Let $V:=\mathbb E_x\pi(x)^*U\ph(x)\in \MM  1_\mathcal M$. Also, let $A:=\mathbb E_x\pi(x)VV^*\pi(x)^*$,  which is a positive element of $\MM $, and let $P:=\chi_{[c/2,1]}(A^{1/2})$. Since $A$ clearly commutes with $\pi$, so does $P$, and therefore $\rho\colon G\to \mathcal U(P\MM P)$ given by $\rho(g)=P\pi(g)P$ for $g\in G$ is a representation.
We have that $\Vert A^{1/2}\Vert_{\mathrm{op}}^2=\Vert A\Vert_{\mathrm{op}}\leq \Vert V\Vert_{\mathrm{op}}^2\leq \Vert U\Vert_{\mathrm{op}}^2=\Vert \tilde\ph(1)\Vert\leq 1$ and by Kadison's inequality $\tau(A^{1/2})^2\leq\tau(A)=\tau(VV^*)=\tau(V^*V)\leq \tau(1_\mathcal M)=1$. Thus, by the inequality $\chi_{[c/2,1]}(t)\leq \frac{2}{c}t$ for $t\in [0,1]$, we get
\begin{align*}
\tau(P)\leq\frac{2}{c}\tau(A^{1/2})\leq\frac{2}{c},
\end{align*}
and by the inequality $t^2(1-\chi_{[c/2,1]}(t))\leq \frac{c}{2}t,$ for $t\in [0,1]$, we get
\begin{align*}
\tau(AP^\perp)\leq \frac{c}{2}\tau(A^{1/2})\leq \frac{c}{2},
\end{align*}
so, remembering that $\tau$ is normal and thus commutes with the mean, we conclude
\begin{align*}
\mathbb E_x\tau(\ph(x)V^*P^\perp\pi(x)^* U)
&=\mathbb E_x\tau(\pi(x)^*U\ph(x)V^*P^\perp)
\\&=\tau(VV^*P^\perp)=\mathbb E_y\tau(\pi(y)VV^*P^\perp\pi(y)^*)
\\&=\mathbb E_y\tau(\pi(y)VV^*\pi(y)^*P^\perp)=\tau(AP^\perp)\leq \frac{c}{2}.
\end{align*}

Furthermore, we have that
\begin{align*}
\mathbb E_x\tau(\ph(x)V^*\pi(x)^*U)&=\mathbb E_x\mathbb E_y\tau(\ph(x)\ph(y)^*U^*\pi(yx^{-1})U)
\\&=\mathbb E_x\mathbb E_y\tau(\ph(x)\ph(y)^*\tilde\ph(yx^{-1}))
\\&=\mathbb E_x\mathbb E_y\mathbb E_z\tau(\ph(x)\ph(y)^*\ph(yx^{-1}z)\ph(z)^*)
\\&=\mathbb E_x\mathbb E_y\mathbb E_z\tau(\ph(x)\ph(y)^*\ph(yz)\ph(xz)^*)\geq c,
\end{align*}

so we conclude
\begin{align*}
\mathbb E_x\tau(\ph(x)V^*\rho(x)^*U)&=\mathbb E_x\tau(\ph(x)V^*\pi(x)^*PU)\\
&=\mathbb E_x\tau(\ph(x)V^*\pi(x)^*U)-\mathbb E_x\tau(\ph(x)V^*\pi(x)^*P^\perp U)
\\&\geq c-\frac{c}{2}=\frac{c}{2}.
\end{align*}
As in the other proof, note that $U$ and $V$ are not partial isometries and they also fail to map into the right Hilbert space, so we have to correct for that.
The latter problem is again solved by replacing $U$ and $V$ with $U_0:=PU$ and $V_0:=PV$ which both lie in $P(\MM  )1_\mathcal M$. Of course, we still have
\[\mathbb E_x\tau(\ph(x)V_0^*\rho(x)^*U_0)=\mathbb E_x\tau(\ph(x)V^*\rho(x)^*U)\geq \frac{c}{2}.\]
Now since $\Vert U_0\Vert_{\mathrm{op}},\Vert V_0\Vert_{\mathrm{op}}\leq 1$, by Lemma \ref{partiso} we have that $U_0=\frac{1}{2}(U_1+U_2)$ and $V_0=\frac{1}{2}(V_1+V_2)$ where $U_1,U_2,V_1,V_2\in P\MM  1_\mathcal M$ are all partial isometries.
Thus, there must be at least one combination of partial isometries, say, $U_1$ and $V_1$ such that
\[\big|\mathbb E_x\tau(\ph(x)V_1^*\rho(x)^*U_1)\big|\geq \frac{c}{2}.\]
By multiplying $U_1$ with a complex number of modulus 1, we can assume
\[\mathbb E_x\tau(\ph(x)V_1^*\rho(x)^*U_1)\geq \frac{c}{2}.\]
Let $B:=\mathbb E_x\ph(x)V_1^*\rho(x)^*$. Then $\Vert B\Vert_{\mathrm{op}}\leq 1$, so $BB^*\leq 1_\mathcal M$, and hence $\tau(BB^*)\leq 1,$ which gives us
\[\tau(U_1U_1^*)\geq\tau(BB^*) \tau(U_1^*U_1)\geq |\tau(BU_1)|=\mathbb E_x\tau(\ph(x)V_1^*\rho(x)^*U_1)\geq\frac{c}{2}.\]
A similar calculation  gives us that $\tau(V_1V_1^*)\geq \frac{c}{2},$ and the proof is complete with $U_1$ and $V_1$ as $U$ and $V$.
\end{proof}

\section{Stability of $\eps$-representations}
We use our main theorem from to prove a stability result for $\eps$-representations. The proof actually works for a slightly larger class of maps.
\begin{defi}
If $G$ is amenable, a map $\ph\colon G\to H$ is called a \emph{mean} $\eps$\emph{-homomorphism} if for all $g\in G$
\[\mathbb E_hd(\ph(gh),\ph(g)\ph(h))<\eps.\]
\end{defi}
Note that the notion of a mean $\varepsilon$-homomorphism also covers the case of maps from a finite group to a discrete group which satisfy for all $g \in G$ that the equality $\ph(gh)=\ph(g)\ph(h)$ holds for most $h$.
Again we use the terminology \textit{a mean $\eps$-representation} if the group $H$ consists of operators.

\begin{theorem}\label{epso}
Let $\eps>0$, let $G$ be an amenable group, let $\mathcal M$ be a von Neumann algebra and let $\Vert\cdot\Vert$ be a unitarily invariant, ultraweakly lower semi-continuous semi-norm $\Vert\cdot \Vert$  on $\MM  $.
Let
$\ph\colon G\to \mathcal U(\mathcal M)$ be a mean $\eps$-representation with respect to the metric coming from $\Vert\cdot \Vert$. Then there is a projection $P\in\MM  $, a partial isometry $U\in P\MM  1_\mathcal M$ and a representation $\rho\colon G\to\mathcal U(P\MM P)$ such that
\[\Vert \ph(g)-U^*\rho(g)U\Vert<71\eps,\qquad g\in G,\]
and 
\[\Vert 1_\mathcal M-U^*U\Vert<40\eps,\qquad\Vert P-UU^*\Vert<30\eps.\]
\end{theorem}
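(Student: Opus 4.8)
The plan is to reduce Theorem~\ref{epso} to an application of Theorem~\ref{main} by verifying that a mean $\eps$-representation satisfies the two hypotheses of that theorem (with $\eps$ replaced by a suitable constant multiple of $\eps$). First I would exploit that each $\ph(x)$ is unitary, so $\ph(x)\ph(x)^*=\ph(x)^*\ph(x)=1_\mathcal M$ and $\Vert\ph(x)\Vert_{\mathrm{op}}=1$; this makes quantities like $\Vert 1_\mathcal M-\ph(x)\ph(y)^*\ph(yz)\ph(xz)^*\Vert$ amenable to telescoping. The key observation is that the defining inequality $\mathbb E_h\Vert\ph(gh)-\ph(g)\ph(h)\Vert<\eps$ controls how far $\ph$ is from multiplicative on average, and by unitary invariance \eqref{ui} one can rewrite differences of four-fold products as differences of the form $\ph(gh)-\ph(g)\ph(h)$ after left/right multiplication by unitaries, which does not change the norm.

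The central computation I would carry out is to bound $\mathbb E_x\mathbb E_y\mathbb E_z\Vert 1_\mathcal M-\ph(x)\ph(y)^*\ph(yz)\ph(xz)^*\Vert$. Using \eqref{ui} to strip off the leading unitary $\ph(x)$ and trailing $\ph(xz)^*$ (or inserting them), I would insert and subtract intermediate terms so as to express the integrand as a telescoping sum in which each summand is, up to multiplication by unitaries, of the form $\ph(ab)-\ph(a)\ph(b)$. Concretely, one compares $\ph(x)\ph(y)^*\ph(yz)\ph(xz)^*$ to $1_\mathcal M$ by relating $\ph(yz)$ to $\ph(y)\ph(z)$ and $\ph(xz)$ to $\ph(x)\ph(z)$; after applying \eqref{ui} and the triangle inequality, and using the invariance \eqref{li} of the mean to align the averaging variables, each resulting term is controlled by the mean $\eps$-representation hypothesis and contributes $O(\eps)$. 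The analogous bound for the second hypothesis $\mathbb E_x\mathbb E_y\mathbb E_z\Vert 1_\mathcal M-\ph(xy)\ph(y)^*\ph(z)\ph(xz)^*\Vert$ follows by the same telescoping, comparing $\ph(xy)$ with $\ph(x)\ph(y)$ and $\ph(xz)$ with $\ph(x)\ph(z)$.

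Once both averaged four-fold quantities are bounded by a constant times $\eps$, I would feed this into Theorem~\ref{main} to obtain $P$, partial isometries $U,V\in P\MM1_\mathcal M$, and a representation $\rho\colon G\to\mathcal U(P\MM P)$ with $\mathbb E_x\Vert 1_\mathcal M-\ph(x)V^*\rho(x)^*U\Vert$ small, together with the range-projection estimates on $U,V$. To pass from the averaged estimate to the pointwise estimate $\Vert\ph(g)-U^*\rho(g)U\Vert<71\eps$ claimed for every $g\in G$, I would use that $\ph$ is a genuine (not merely mean) $\eps$-representation together with multiplicativity of $\rho$: the map $g\mapsto U^*\rho(g)U$ is close to multiplicative, and comparing $U^*\rho(g)U$ with $\ph(g)$ for a fixed $g$ via the averaged bound and a translation argument (replacing $x$ by $gx$ and using \eqref{li}) removes the average. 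Here the estimates $\Vert 1_\mathcal M-U^*U\Vert$, $\Vert P-UU^*\Vert$, $\Vert P-VV^*\Vert$ let me absorb $V$ into $U$ up to $O(\eps)$, so that the two-sided object $U^*\rho(g)U$ appears in place of $\ph(x)V^*\rho(x)^*U$.

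The main obstacle I anticipate is the bookkeeping in the telescoping step and, more importantly, eliminating $V$ and converting the averaged bound into a uniform pointwise bound. Controlling $\Vert\ph(g)-U^*\rho(g)U\Vert$ for every single $g$ (rather than on average) requires that the error term be uniform, which is where the genuine $\eps$-representation property and a translation-by-$g$ trick, combined with the range estimates of Theorem~\ref{main}, must be used carefully to collapse $V^*\rho(x)^*U$ into $\rho(g)$-type expressions without losing uniformity; keeping track of the accumulated constants so that the final bound lands at $71\eps$ (and $40\eps$, $30\eps$) will be the delicate part.
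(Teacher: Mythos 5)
Your outline follows the paper's route in its two main moves: telescoping with \eqref{ui}, \eqref{li} and the triangle inequality to show that a mean $\eps$-representation satisfies both hypotheses of Theorem \ref{main} with $2\eps$ in place of $\eps$, and then a translation-by-$g$ argument to upgrade the averaged conclusion to a pointwise one. But two points in your plan are genuine problems. First, you say you will use that $\ph$ is a \emph{genuine} (not merely mean) $\eps$-representation to get uniformity in $g$. That hypothesis is not available --- the theorem assumes only a mean $\eps$-representation --- and it is also not needed: the definition of a mean $\eps$-homomorphism is itself quantified pointwise in $g$, namely for every fixed $g$ one has $\EE_x\Vert\ph(gx)-\ph(g)\ph(x)\Vert<\eps$, and this is exactly the uniform-in-$g$ input the de-averaging step consumes. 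The paper's chain is: since $\Vert 1_\mathcal M-\EE_x\ph(x)V^*\rho(x)^*U\Vert<20\eps$ and $\ph(g)$ is unitary, replace $\ph(g)$ by $\ph(g)\EE_x\ph(x)V^*\rho(x)^*U$ at cost $20\eps$; use the mean property (cost $\eps$, via \eqref{p1}) to pass to $\ph(gx)$; substitute $x\mapsto g^{-1}x$ by \eqref{li} and use $\rho(g^{-1}x)^*=\rho(x)^*\rho(g)$; insert $UU^*$ at cost $\Vert P-UU^*\Vert<30\eps$; and apply the averaged bound a second time (cost $20\eps$), giving $20+1+30+20=71$. Note in particular that $V$ is eliminated by using the averaged bound \emph{twice}, combined with multiplicativity of $\rho$ and insertion of $UU^*$ --- not by ``absorbing $V$ into $U$'' through the range estimates; the estimate $\Vert P-VV^*\Vert$ is never used, and indeed $V$ need not be close to $U$ in any sense you can extract here.

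Second, the constants as you plan them cannot land at $71\eps$. If you invoke Theorem \ref{main} as stated (with $V$ a partial isometry), the averaged bound is $44\cdot 2\eps=88\eps$, which already exceeds $71\eps$ before the de-averaging even starts; the chain above would then give on the order of $88+1+30+88=207$ times $\eps$. The paper instead uses Remark \ref{nemid}: since the pointwise argument never needs $V$ to be a partial isometry, one may stop the proof of Theorem \ref{main} before correcting $V$, obtaining the sharper averaged bound $10\cdot 2\eps=20\eps$ (and $\Vert 1_\mathcal M-U^*U\Vert<20\cdot 2\eps=40\eps$, $\Vert P-UU^*\Vert<15\cdot 2\eps=30\eps$). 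Without this observation --- which your proposal does not identify, beyond acknowledging that the bookkeeping is delicate --- the stated bounds $71\eps$, $40\eps$, $30\eps$ are out of reach, so you would prove the theorem only with substantially worse constants.
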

\begin{proof}
We show that $\ph$ satisfies the conditions of our Theorem \ref{main} with $2\eps$ instead of $\eps$. 
\begin{align*}
\mathbb E_x&\mathbb E_y\mathbb E_z\Vert 1_\mathcal M-\ph(x)\ph(y)^*\ph(yz)\ph(xz)^*\Vert
\\&\leq \mathbb E_x\mathbb E_y\EE_z\Big(\Vert 1_\mathcal M-\ph(xy^{-1})\ph(yz)\ph(xz)^*\Vert
\\& \qquad+\Vert(\ph(xy^{-1})-\ph(x)\ph(y)^*)\ph(xy)\ph(xz)^*\Vert\Big)\tag{$\triangle$}
\\&<\EE_x\EE_y\EE_z\Vert 1_\mathcal M-\ph(xy^{-1})\ph(yz)\ph(xz)^*\Vert+\eps\tag{\ref{ui},\ref{li}}
\\&\leq\EE_x\EE_y\EE_z\Vert\ph(xz)-\ph(xy^{-1})\ph(yz)\Vert\tag{\ref{ui}}
+\eps
\\&=\EE_x\EE_y\EE_z\Vert \ph(xz)-\ph(x)\ph(z)\Vert+\eps<2\eps\tag{\ref{li}}
\end{align*}
and
\begin{align*}
\EE_x&\EE_y\EE_z\Vert 1_\mathcal M-\ph(xy)\ph(y)^*\ph(z)\ph(xz)^*\Vert
\\&\leq \EE_x\EE_y\EE_z\Big(\Vert 1_\mathcal M-\ph(x)\ph(z)\ph(xz)^*\Vert
\\&\qquad+\Vert (\ph(x)-\ph(xy)\ph(y)^*)\ph(z)\ph(xz)^*\Vert\Big)\tag{$\triangle$, \ref{ui}}
< 2\eps.
\end{align*}
It follows from Remark \ref{nemid} that 
there are a projection $P\in\MM  $ and operators $U,V\in P\MM  1_\mathcal M$, such that $U$ is a partial isometry and $\Vert V\Vert_{\mathrm{op}}\leq 1$, and a representation $\rho\colon G\to\mathcal M$ so that
\[\Vert 1_\mathcal M-\EE_x\ph(x)V^*\rho(x)^*U\Vert\leq\EE_x\Vert 1_\mathcal M-\ph(x)V^*\rho(x)^*U\Vert<20\eps\tag{\ref{r1}}\]
and
\[\Vert 1_\mathcal M-U^*U\Vert<40\eps,\qquad \Vert P-UU^*\Vert<30\eps.\]
Thus 
\begin{align*}
&\Vert \ph(g)-U^*\rho(g)U\Vert<\Vert \ph(g)\EE_x\ph(x) V^*\rho(x)^*U-U^*\rho(g)U\Vert+20\eps\tag{$\triangle$,\ref{ui}}
\\&\leq \EE_x\Vert\ph(g)\ph(x)V^*\rho(x)^*U-U^*\rho(g)U\Vert+20\eps\tag{\ref{r1}}
\\&< \EE_x\Vert \ph(gx)V^*\rho(x)^*U-U^*\rho(g)U\Vert+21\eps\tag{$\triangle$,\ref{p1}}
\\&= \EE_x\Vert\ph(x)V^*\rho(g^{-1}x)^*U-U^*\rho(g)U\Vert+21\eps\tag{\ref{li}}
\\&\leq \EE_x\Vert \ph(x)V^*\rho(x)^*UU^*\rho(g)U-U^*\rho(g)U\Vert+51\eps\tag{$\triangle$, \ref{p1}}
\\&<20\eps+51\eps=71\eps.\qedhere
\end{align*}
\end{proof}
\begin{rem}
In fact, the best way to prove this theorem might be to start from scratch and use the methods from the proof of Theorem \ref{main} accomodated suitably. This will, however, save us only $2\eps$ and we end up with the estimate $\Vert \ph(g)-U^*\rho(g)U\Vert<69\eps.$
\end{rem}

This general stability result for $\eps$-representations subsumes the theorems advertised in the introduction. These results follow almost immediately (but our estimates are different than the original ones). For convenience, we include some comments to the proofs here.

\begin{proof}[Proof of Theorem \ref{kazh}] We use Theorem \ref{epso} in the case where $\mathcal M\del\mathbb B(\mathcal H)$ is the von Neumann algebra generated by $\ph(G)$ and $\Vert\cdot\Vert$ is the operator norm $\Vert\cdot\Vert_{\mathrm{op}}$. If $\eps<\frac{1}{40}$, then $\Vert 1_\mathcal M-U^*U\Vert_{\mathrm{op}},\Vert P-UU^*\Vert_{\mathrm{op}}<1$ and since $1_\mathcal M-U^*U$ and $P-UU^*$ are projections, this implies $1_\mathcal M=U^*U$ and $P=UU^*$. It follows that $\rho'\colon G\to\mathcal U(\mathcal M)$ given by $\rho'(x)=U^*\rho(x)U$ is a unitary representation, and thus the result follows.
\end{proof}

\begin{proof}[Proof of Theorem \ref{gohat}] 
In the case $\mathcal M=\mathbb M_n$, we can identify $\MM$ with $\mathbb B(\ell^2(\NN))$ in such a way that $1_\mathcal M$ is a rank $n$ projection and we use Theorem \ref{epso} with $\Vert\cdot\Vert$ being the $2$-norm $\Vert\cdot\Vert_2$ coming from the semi-finite trace $\tau$ on $\mathbb B(\ell^2(\NN))$ normalized in such a way that rank $1$-projections have trace $\frac{1}{n}$. The inequalities $\Vert 1_\mathcal M-U^*U\Vert_2<40\eps,\Vert P-UU^*\Vert_2<30\eps$ translate into 
\[|\rank(P)-\rank(1_\mathcal M)|<(40^2+30^2)\eps^2n=2500\eps^2n.\]
First assume $\rank(P)\geq \rank(1_\mathcal M)$. Let $Q=P-UU^*$ and $R=1_\mathcal M-U^*U$. Since $U\in P(\MM)1_\mathcal M$ is a partial isometry, there is a partial isometry $U_0=Q(\MM)R$ such that $U':=U+U_0\in P(\MM)1_\mathcal M$ is an isometry.
\begin{align*}
&\Vert \ph(g)-(U')^*\rho(g)U'\Vert_2
\\&\leq \Vert \ph(g)-U^*\rho(g)U\Vert_2+\Vert U_0^*\rho(g)U\Vert_2+\Vert U^*\rho(g)U_0\Vert_2 +\Vert U_0^*\rho(g)U_0\Vert_2
\\&\leq 71\eps+3\Vert Q\Vert_2<161\eps.
\end{align*}

If $\rank(P)\leq \rank(1_\mathcal M)$, then we pick any projection $Q\geq P$ with $\rank(Q)=\rank(1_\mathcal M)$ and consider the representation 
\[\rho'\colon G\to Q(\MM)Q,\qquad \rho'(g)=\rho(g)+Q-P.\]
Since $U\in Q(\MM)1_\mathcal M$ is a partial isometry, it extends to a unitary $U'\colon 1_\mathcal M(\ell^2(\NN))\to Q(\ell^2(\NN))$ and we get that
\begin{align*}
\Vert \ph(g)-(U')^*\rho'(g)U'\Vert_2&\leq \Vert \ph(g)-U^*\rho(g)U^*\Vert_2+\Vert (U')^*(Q-P)U'\Vert_2
\\&<(71+\sqrt{2500})\eps=131\eps.
\end{align*}
In both cases we get the desired result.
\end{proof}

\section*{Acknowledgments}

Part of this research was carried out while the first author visited RIMS in Kyoto. He thanks this institution for its hospitality. The results of this article are part of the PhD thesis of the first author. This research was supported by ERC Starting Grant No.\ 277728 and ERC Consolidator Grant No.\ 681207.

\begin{bibdiv}
\begin{biblist}

\bib{bto}{article}{
   author={Burger, M.},
   author={Ozawa, N.},
   author={Thom, A.},
   title={On Ulam stability},
   journal={Israel J. Math.},
   volume={193},
   date={2013},
   number={1},
   pages={109--129},
}

\bib{gow2}{article}{
   author={Gowers, W. T.},
   title={A new proof of Szemer\'edi's theorem},
   journal={Geom. Funct. Anal.},
   volume={11},
   date={2001},
   number={3},
   pages={465--588},
}

\bib{gow}{article}{
   author={Gowers, W. T.},
   title={A new proof of Szemer\'edi's theorem for arithmetic progressions of
   length four},
   journal={Geom. Funct. Anal.},
   volume={8},
   date={1998},
   number={3},
   pages={529--551},
}

\bib{gh}{article}{ 
title={Inverse and stability theorems for approximate representations of finite groups},
author={Gowers, W. T.}
author={Hatami, O.},
date={2016}
journal={arXiv:1510.04085v2},
}

\bib{gkr}{article}{
   author={Grove, K.},
   author={Karcher, H.},
   author={Ruh, E. A.},
   title={Group actions and curvature},
   journal={Bull. Amer. Math. Soc.},
   volume={81},
   date={1975},
   pages={89--92},
}

\bib{johnson}{article}{
   author={Johnson, B. E.},
   title={Approximately multiplicative functionals},
   journal={J. London Math. Soc. (2)},
   volume={34},
   date={1986},
   number={3},
   pages={489--510},
}

\bib{rkgkp}{article}{
   author={Kadison, R. V.},
   author={Pedersen, G. K.},
   title={Means and convex combinations of unitary operators},
   journal={Math. Scand.},
   volume={57},
   date={1985},
   number={2},
   pages={249--266},
   issn={0025-5521},
}

\bib{kasp}{article}{
   author={Kasparov, G. G.},
   title={Hilbert $C^{\ast} $-modules: theorems of Stinespring and
   Voiculescu},
   journal={J. Operator Theory},
   volume={4},
   date={1980},
   number={1},
   pages={133--150},
}

\bib{art:kazh}{article}{
   author={Kazhdan, D.},
   title={On $\varepsilon$-representations},
   journal={Israel J. Math.},
   volume={43},
   date={1982},
   number={4},
   pages={315--323},
}

\bib{rolli}{article}{ 
title={Quasi-morphisms on free groups},
author={P. Rolli},
date={2009},
journal={arXiv:0911.4234v2}
}

\bib{shtern}{article}{ 
title={Roughness and approximation of quasi-representations of amenable groups},
author={A. I. Shtern},
date={1999},
Journal={Mathematical Notes},
number={65(6)}
pages={760-769}
}

\bib{ulam}{book}{
   author={Ulam, S. M.},
   title={A collection of mathematical problems},
   series={Interscience Tracts in Pure and Applied Mathematics, no. 8},
   publisher={Interscience Publishers, New York-London},
   date={1960},
   pages={xiii+150}
}
\end{biblist}
\end{bibdiv}

\end{document}